\begin{document}
\providecommand{\keywords}[1]{\textbf{\textit{Keywords: }} #1}
\newtheorem{thm}{Theorem}[section]
\newtheorem{lemma}[thm]{Lemma}
\newtheorem{prop}[thm]{Proposition}
\newtheorem{cor}[thm]{Corollary}
\theoremstyle{definition}
\newtheorem{defi}{Definition}[section]
\theoremstyle{remark}
\newtheorem{remark}{Remark}
\newtheorem{prob}{Problem}
\newtheorem{conjecture}{Conjecture}
\newtheorem{ques}{Question}

\newcommand{\cc}{{\mathbb{C}}}   
\newcommand{\ff}{{\mathbb{F}}}  
\newcommand{\nn}{{\mathbb{N}}}   
\newcommand{\qq}{{\mathbb{Q}}}  
\newcommand{\mQ}{{\mathbb{Q}}}  
\newcommand{\rr}{{\mathbb{R}}}   
\newcommand{\zz}{{\mathbb{Z}}}  
\newcommand{\K}{\mathbb{K}}
\newcommand{\fp}{\mathfrak{p}}
\newcommand{\fP}{\mathfrak{P}}
\newcommand{\ra}{\rightarrow}

\newcommand\gexp{{\operatorname{ge}}}
\newcommand\lcm{{\operatorname{lcm}}}
\newcommand\Aut{{\operatorname{Aut}}}
\newcommand\Gal{{\operatorname{Gal}}}
\newcommand\ord{{\operatorname{ord}}}

\title[Unramified extensions over low degree fields ]{Unramified extensions over low degree number fields}

\author{Joachim K\"onig}
\email{jkoenig@kaist.ac.kr}

\author{Danny Neftin}
\email{dneftin@technion.ac.il}

\author{Jack Sonn}
\email{sonn@math.technion.ac.il}

\address{Department of Mathematical Sciences, KAIST, 291 Daehak-ro, Yuseong-gu, Daejeon 34141, South Korea}

\address{Department of Mathematics, Technion, Israel Institute of Technology, Haifa 32000, Israel}

\address{Department of Mathematics, Technion, Israel Institute of Technology, Haifa 32000, Israel}

\keywords{Galois theory; unramified extensions; bounded ramification; specialization of Galois covers}

\begin{abstract}
For various nonsolvable groups $G$, we prove the existence of extensions of the rationals $\mQ$ with Galois group $G$ and inertia groups of order dividing $\gexp(G)$, 
where $\gexp(G)$ is the smallest exponent of a generating set for $G$. 
For these groups $G$, this gives the existence of number fields of degree $\gexp(G)$ with an unramified $G$-extension. 
The existence of such extensions over $\mQ$ for all finite groups would imply that, for every finite group $G$, there exists a quadratic number field admitting an unramified $G$-extension,  as was recently conjectured. 
We also provide further evidence for the existence of such extensions for all finite groups, by proving their existence when $\mQ$ is replaced with a function field $k(t)$ where $k$ is an ample field. 
\end{abstract}
\maketitle

\section{Introduction}
The (\'etale) fundamental group of (the spectrum of) the ring of integers of a number field is a central object in number theory, 
in particular since its abelianization, the class group of $K$, is closely related to many classical problems in number theory.
The distribution of ($p$-parts of) class groups over imaginary quadratic fields has been extensively studied, and is expected to be uniform by the Cohen--Lenstra heuristics. 
These heuristics were  recently generalized to pro-$p$ groups by Boston--Bush--Hajir \cite{BB}, to pro-odd groups by Boston--Wood \cite{BW} and to finite groups by Wood \cite{MWood}. 
Very little is known in the nonabelian case concerning the mere existence of such extensions, that is concerning: 
\begin{ques}\label{ques1} Does every finite group appear as a Galois group of an unramified extension of some quadratic number field?
\end{ques}

On the other hand, It is well known that every finite group $G$ appears as a Galois group of an unramified extension over some number field. 
This is obtained by realizing $G$ as a Galois group of a tame (tamely ramified) extension $L_0/K$ over some number field, and finding a (not necessarily Galois) number field $M$ which is disjoint from $L_0$ and satisfies:
``for every prime $\fP$ of $M$, the ramification index of $\fP$ over its restriction $\fp$ to $K$ is divisible by the ramification index of $\fp$ in $L_0$".
Abhyankar's lemma then implies that $L:=L_0M$ is an unramified extension of $M$ with $\Gal(L/M)\cong G$. 
Moreover, the resulting extension $L/M$ is {\textit{tamely defined}} over $K$, that is, there is a tame Galois extension $L_0/K$ such that  $L_0\otimes_K M\cong L$. This is a common method for generating unramified extensions, e.g.~used in \cite{Kondo}, \cite{KRS}, \cite{HM}, \cite{KKS2}, \cite{Ozaki}. In fact, since the Inverse Galois Problem is known for many groups $G$, we restrict our consideration to such groups and hence choose $K=\mQ$.
One benefit of this approach is that it gives infinitely many number fields $M$ admitting an unramified $G$-extension with $[M:\mQ]=d$, where $d$ is the lcm of all ramification indices in $L_0/\mQ$. 
In fact, a positive density of all number fields of the form $M=\mQ(\sqrt[d]{N})$ will automatically have this property. 

Since the inertia groups of $L_0/\mQ$ generate $G$, the degree of such a number field $M$ cannot be smaller than the exponent of a generating tuple $S$ for $G$. Running over all generating tuples $S$, we call the minimal exponent of a generating set, the {\textit{generator exponent}} of $G$ and denote:  $$\gexp(G):=\min_S lcm\{ord(x)\mid x\in S\},$$ where $S$ ranges over all generating subsets of $G$. 
Abhyankar's lemma then implies that if $M$ admits an unramified extension $L/M$ which is tamely defined over $\mQ$, then $[M:\mQ]\geq \gexp(G)$.

In fact, it is unknown whether the minimal degree $e(\mQ,G)$, of a number field $K$ admitting an unramified $G$-extension defined over $\mQ$ by a tame extension, equals $\gexp(G)$. 
%
\begin{ques}
\label{conj:e3}
Does $e(\mQ,G)$ equal $\gexp(G)$? That is, are there infinitely many number fields $K$ of degree $\gexp(G)$ admitting an unramified $G$-extension which is defined over $\mQ$ by a tame extension?
\end{ques}

Since $\gexp(G)=2$ holds for many groups including all nonabelian almost simple groups, an affirmative answer supports the predictions concerning Question \ref{ques1}. In fact, we note  that an affirmative answer for all finite groups implies 
an affirmative answer to Question \ref{ques1} for every $G$, by Remark \ref{rem:implication}. Moreover, for groups $G$ with $\gexp(G)=2$, Abhyankar's lemma in fact yields that a positive proportion of quadratic number fields admit an unramified $G$-extension.

Well known cases in which $e(\qq,G) = \gexp(G)$ include symmetric, abelian groups and dihedral groups, cf.\ Section \ref{sec:red}.
An affirmative answer follows for an extensive class of $p$-groups  from Kim \cite{KKS2}. Namely, \cite{KKS2} shows that every $p$-group $G$ can be realized as a tamely defined unramified extension over a number field $M$ whose degree is the exponent $\exp(G)$. Although the equality $\gexp(G)=\exp(G)$ does not always hold (e.g. for $G=C_p\wr C_p$), it holds for many $p$-groups such as the family of regular $p$-groups, and in particular  all $p$-groups with nilpotency class $c\leq p-1$, see Remark \ref{rem:p-groups}. 

In this paper we provide further support for  the equality $e(\qq,G)=\gexp(G)$. First, we consider nonsolvable groups:  Proposition \ref{ge2} proves the equality for the small order nonabelian almost simple groups: 
\begin{equation}\label{equ:list}A_5,PSL_2(7), PSL_2(11), M_{11}, PSL_3(3), PGL_2(7), PSp_4(3).2, PSp_6(2).\end{equation}
We note that the main contribution here is the criteria, described below, which gives this list of examples and can be applied to many other almost simple groups. We then show that this family of realizations and the previously known realizations are closed under finite direct products, see Remark \ref{rem:direct}, and wreath products with symmetric groups, see Proposition \ref{prop:wreath}, giving an affirmative answer for infinite families such as $((M_{11}^k \wr S_{n_1}) \wr S_{n_2} \wr \cdots ) \wr  S_{n_r}$ for arbitrary positive integers $k,r, n_1,\ldots,n_r$. 

We provide further support in view of the analogy with function fields. In similarity with $\mQ$,  the rational function field $k(x)$ has no $k$-regular unramified extensions, and hence we analogously consider the minimal degree $e(k(x),G)$  of an extension $M/k(x)$ admitting a $k$-regular unramified $G$-extension  $L/M$, which is defined over $k(x)$. Here, we can consider ample fields $k$, due to their better understood arithmetic, see \cite{BBF},\cite{Pop3} for surveys. These include $k=\mathbb C((x))$, which due to its resemblance with a finite field is also known as a quasi-finite field, and its function field $k(x)$ resembles a global field. For ample $k$, we give an affirmative answer to both questions for arbitrary finite groups $G$. Namely,  $e(k(x),G)=\gexp(G)$ and  $G$ is the Galois group of an unramified $k$-regular extension $L/M$ over a quadratic extension $M/k(x)$, see Section \ref{sec:ff}. We note that further support for an affirmative answer follows from \cite{MWood}, which gives the desired extensions of $k(x)$ when $k$ is a sufficiently large finite field and $\gexp(G)=2$. 
Moreover, such results are expected in cases where $\gexp(G)>2$ as well, in view of \cite{EVW}, cf.~Remark \ref{rem:Fq}. 

Our main tool for constructing extensions $L_0/\mQ$ with nonabelian almost simple Galois group and given inertia groups over $\mQ$ is specialization of regular extensions of $L/\mQ(t)$ via Beckmann's theorem. We note that with different applications in mind, approaches similar to ours are developed in \cite{BSS}, \cite{BiluGillibert}, \cite[Prop.\ 2.5 and 2.6]{PV}. 
Beckmann's theorem allows reading the inertia groups of specializations from geometric inertia groups, except at a finite set $S$ of ``bad" primes. It is then  possible to force the completion of $L_0/\mQ$ at primes in $S$ to be unramified, except in the so called ``universally ramified primes" $U(L/\mQ(t))$. 
Our main criteria,  assert under various conditions that $U(L/\mQ(t))$ is the empty set. Namely, Theorem \ref{coprime_in} gives completely group theoretic conditions on the inertia groups of $L/\mQ(t))$ under which $U(L/\mQ(t))=\emptyset$. These conditions are easily verified in order to obtain the last three groups in \eqref{equ:list}. Similar conditions (Remark \ref{rem:main_crit}) are used on known realizations to obtain $M_{11}, PSL_2(11)$ and $PSL_3(3)$. Finally, $A_5$ and $PSL_2(7)$ are obtained by testing known polynomials.  

{\textbf{Acknowledgement:}}
We thank Brandon Alberts,  Nigel Boston, and Kwang--Seob Kim for valuable background information.
This research was supported by the Israel Science Foundation (grant No. 577/15).

\section{A reduction and first examples}\label{sec:red} 

\subsection*{Reduction} Let $K$ be a number field and $G$ a finite group. 
Denote by $e(K,G)$ the smallest degree $[M:K]$ of an overfield $M$ which admits an unramified $G$-extension which is defined over $K$ by a tame $G$-extension. 

\begin{lemma}\label{lem:basic}
The number $e=e(K,G)$ coincides with the minimal number $e'=e'(K,G)$ such that $K$ admits a tamely ramified $G$-extension with all inertia groups of order dividing $e'$. In particular, $e(\mQ,G)\geq \gexp(G)$. 
\end{lemma}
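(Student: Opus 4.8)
The plan is to prove the equality $e(K,G)=e'(K,G)$ by two opposite inequalities, with Abhyankar's lemma as the common engine, and then to deduce the lower bound $e(\mQ,G)\ge\gexp(G)$ from Minkowski's theorem. Throughout I would use that for a tame $G$-extension $L_0/K$ the inertia group at a ramified prime is cyclic of order equal to the ramification index there, so that ``all inertia groups of order dividing $e'$'' means exactly ``all ramification indices dividing $e'$''. Writing $d(L_0)$ for the lcm of these indices, the minimal admissible bound for a fixed $L_0$ is $d(L_0)$, and hence $e'(K,G)=\min_{L_0}d(L_0)$ as $L_0$ ranges over tame $G$-extensions of $K$.

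For the inequality $e(K,G)\le e'(K,G)$ I would start from a tame $G$-extension $L_0/K$ with $d(L_0)=e'$ and build an overfield $M$ with $[M:K]\le e'$ realizing the bound. Take $M=K(\sqrt[e']{a})$ and choose $a\in K^\times$ with $\gcd(v_\fp(a),e')=1$ at each of the finitely many primes $\fp$ ramified in $L_0$ (possible by the Chinese remainder theorem / weak approximation). A Newton-polygon (Eisenstein-type) argument then shows every such $\fp$ is totally ramified of index $e'$ in $M$, even at primes dividing $e'$, so that $e_\fp\mid e'=e(\mathfrak q|\fp)$ for each $\mathfrak q$ above $\fp$. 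Since $L_0/K$ is tame, Abhyankar's lemma---which requires only $L_0$, and not $M$, to be tamely ramified---yields that $L_0M/M$ is unramified. For suitable (indeed a positive density of) $a$ the polynomial $x^{e'}-a$ is irreducible and $M$ is linearly disjoint from $L_0$, so $\Gal(L_0M/M)\cong G$ and $L:=L_0M$ is an unramified $G$-extension of $M$ defined over $K$ by $L_0$. Hence $e(K,G)\le[M:K]\le e'$.

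The opposite inequality $e'(K,G)\le e(K,G)$ is the step I expect to be the main obstacle, because one must upgrade the easy bound ``$e_\fp\le[M:K]$'' to a genuine divisibility. Take $M$ with $[M:K]=e(K,G)$ admitting an unramified $G$-extension $L/M$ defined over $K$ by a tame $G$-extension $L_0/K$, so $L=L_0M$. Fix a prime $\fp$ of $K$ ramified in $L_0$ with inertia order $e_\fp$. Because $L_0M/M$ is everywhere unramified, the tame local Abhyankar formula, giving compositum ramification $e_\fp/\gcd(e_\fp,e(\mathfrak q|\fp))$, forces $e_\fp\mid e(\mathfrak q|\fp)$ for \emph{every} prime $\mathfrak q$ of $M$ above $\fp$. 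Writing $e(\mathfrak q|\fp)=e_\fp\, a_{\mathfrak q}$ and inserting this into the fundamental identity $\sum_{\mathfrak q|\fp}e(\mathfrak q|\fp)f(\mathfrak q|\fp)=[M:K]$ yields $[M:K]=e_\fp\big(\sum_{\mathfrak q}a_{\mathfrak q}f(\mathfrak q|\fp)\big)$, whence $e_\fp\mid[M:K]$. As this holds for every ramified $\fp$, the lcm $d(L_0)$ divides $[M:K]=e$, so $e'\le d(L_0)\le e$. Combining the two inequalities gives $e=e'$.

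Finally, for the ``in particular'' clause with $K=\mQ$, I would use that any tame $G$-extension $L_0/\mQ$ has inertia groups generating $G$: the subgroup they generate has fixed field the maximal subextension unramified over $\mQ$, which is $\mQ$ itself by Minkowski's theorem. Choosing a generator of each cyclic inertia group produces a generating set $S$ of $G$ with $\lcm\{\ord(x):x\in S\}=d(L_0)$, so $\gexp(G)\le d(L_0)$; taking the minimum over all $L_0$ gives $\gexp(G)\le e'(\mQ,G)=e(\mQ,G)$, as claimed.
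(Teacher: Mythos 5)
Your proof is correct and takes essentially the same route as the paper: the radical extension $M=K(\sqrt[e']{a})$ made totally ramified at the ramified primes (with an auxiliary congruence/density condition forcing linear disjointness) plus Abhyankar's lemma for $e\le e'$, the forced divisibility $e_{\mathfrak p}\mid e(\mathfrak q/\mathfrak p)$ combined with the fundamental identity for $e'\le e$, and Minkowski's theorem for $e(\mQ,G)\ge\gexp(G)$. The points you spell out---the one-sided tameness hypothesis in Abhyankar's lemma and the summation $\sum e(\mathfrak q/\mathfrak p)f(\mathfrak q/\mathfrak p)=[M:K]$---are exactly the steps the paper leaves implicit.
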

This relies on the well known Abhyankar lemma:
\begin{lemma} \label{lem:abh}
Let $L_1,L_2/K$ be two extensions and $L_1L_2$ their compositum. 
Let $\fp_1,\fp_2$ be primes of $L_1,L_2$, resp., with the same restriction $\fp$ to $K$, and $\frak P$ a prime of $L_1L_2$ lying over both. 
Then $e(\frak P/\fp_2) = e(\fp_1/\fp)/\gcd(e(\fp_1/\fp),e(\fp_2/\fp))$. In particular, $e(\frak P/\fp_2)=1$ if $e(\fp_1/\fp)$ divides $e(\fp_2/\fp)$. 
\end{lemma}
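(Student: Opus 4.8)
The plan is to prove the classical Abhyankar lemma; one should read the statement with the standing hypothesis that $\fp_1/\fp$ is tamely ramified (this holds throughout the paper, where $L_1=L_0$ is tame over $\qq$), and I include it. Let $p$ be the residue characteristic, so $p\nmid e_1$ where I abbreviate $e_i:=e(\fp_i/\fp)$, and put $e_0:=e(\frak P/\fp)$. First I would reformulate: multiplicativity of ramification indices along $K\subseteq L_i\subseteq L_1L_2$ gives $e_0=e_i\cdot e(\frak P/\fp_i)$ for $i=1,2$, so the asserted formula $e(\frak P/\fp_2)=e_1/\gcd(e_1,e_2)$ is equivalent to $e_0=\lcm(e_1,e_2)$. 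One divisibility, $\lcm(e_1,e_2)\mid e_0$, is free (as $e_1\mid e_0$ and $e_2\mid e_0$); all the content is in $e_0\mid\lcm(e_1,e_2)$, which is where tameness enters.

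For that I would pass to completions, which changes none of the four indices: localizing and completing at $\fp$ and taking the factor of $(L_1L_2)\otimes_K K_\fp$ cut out by $\frak P$ identifies the completion of $L_1L_2$ at $\frak P$ with the compositum $\hat M:=\hat L_1\hat L_2$ of the completions $\hat L_i$ at $\fp_i$ inside a fixed algebraic closure of $\hat K:=K_\fp$. Next, after a finite unramified base change over $\hat K$ (still preserving every ramification index) over which $\hat L_1$ loses its maximal unramified subextension, I may assume $\hat L_1/\hat K$ is totally tamely ramified of degree $e_1$; by the structure theory of tamely ramified extensions of complete discretely valued fields with (finite, hence) perfect residue field, this lets me write $\hat L_1=\hat K(\alpha)$ with $\alpha^{e_1}=\pi u$ for a uniformizer $\pi$ of $\hat K$ and a unit $u$, so $\hat M=\hat L_2(\alpha)$.

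The key computation is then the following. Fix a uniformizer $\varpi$ of $\hat L_2$, write $d=\gcd(e_1,e_2)$, $e_1=de_1'$, $e_2=de_2'$ with $\gcd(e_1',e_2')=1$, and note $v(\pi u)=e_2$ in $\hat L_2$. A short valuation count in $\hat M$ (from $e_1\,v(\alpha)=e_2\cdot e(\hat M/\hat L_2)$) shows that $\delta:=\alpha^{e_1'}\varpi^{-e_2'}$ has valuation $0$, i.e.\ is a unit, and $\delta^{d}=\alpha^{e_1}\varpi^{-e_2}=(\pi\varpi^{-e_2})u$ is a unit of $\hat L_2$. Since $p\nmid d$, the polynomial $X^{d}-\delta^{d}$ has separable reduction, so $\hat L_2(\delta)/\hat L_2$ is unramified. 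Finally $\alpha^{e_1'}=\delta\varpi^{e_2'}$ presents $\hat M=\hat L_2(\delta)(\alpha)$ as obtained from $\hat L_2(\delta)$ by adjoining an $e_1'$-th root of an element of valuation $e_2'$ in $\hat L_2(\delta)$, with $\gcd(e_1',e_2')=1$ and $p\nmid e_1'$; a Newton-polygon argument shows $X^{e_1'}-\delta\varpi^{e_2'}$ is irreducible over $\hat L_2(\delta)$ and generates a totally ramified extension of degree $e_1'$. Hence $e(\frak P/\fp_2)=e(\hat M/\hat L_2)=e(\hat M/\hat L_2(\delta))\cdot e(\hat L_2(\delta)/\hat L_2)=e_1'=e_1/\gcd(e_1,e_2)$, equivalently $e_0=\lcm(e_1,e_2)$; the ``in particular'' is the special case $e_1\mid e_2$.

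The one real obstacle -- and the sole place tameness of $\fp_1/\fp$ is used -- is the reduction of the totally ramified part of $\hat L_1$ to radical form together with the Newton-polygon bookkeeping over $\hat L_2(\delta)$; without tameness the statement genuinely fails (for example $\qq_2(\sqrt2)\cdot\qq_2(\sqrt{-1})=\qq_2(\zeta_8)$ is totally ramified of degree $4$ over $\qq_2$, whereas the formula would predict ramification index $2/\gcd(2,2)=1$ for the prime of the compositum over $\qq_2(\sqrt{-1})$). Everything else is routine multiplicativity of ramification indices and unramified base change.
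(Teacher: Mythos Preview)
Your proof is correct. The paper does not actually prove this lemma: it introduces it as ``the well known Abhyankar lemma'' and immediately moves on to the proof of Lemma~\ref{lem:basic}, later pointing to \cite[Theorem~3.9.1]{St} for the function-field version. So there is no argument in the paper to compare against; your route (pass to completions, strip off the maximal unramified subextension so that $\hat L_1/\hat K$ is totally tamely ramified and hence radical, then do the explicit valuation/Newton-polygon bookkeeping over $\hat L_2$) is the standard one and is carried out cleanly.

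You are also right to flag that the tameness hypothesis on $\fp_1/\fp$ is missing from the statement as written. It is implicitly in force in every application in the paper (the field playing the role of $L_1$ is always a tame $G$-extension of $K$), and your $\qq_2(\sqrt{2})\cdot\qq_2(\sqrt{-1})=\qq_2(\zeta_8)$ example shows the formula genuinely fails without it. Including that hypothesis in your write-up, as you did, is the correct call.
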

\begin{proof}[Proof of Lemma \ref{lem:basic}] 
$e'\le e$: 
Let $L/K$ be a $G$-extension with inertia groups of order dividing $e'$ over the primes $\fp_1,\ldots,\fp_r$. Let $v_{\fp_i}$ denote the normalized discrete valuation with maximal ideal $\fp_i$, for $i=1,\ldots,r$.
For  $\alpha\in K$ with $v_{\fp_i}(\alpha)=1$ for $i=1,\ldots,r$, the extension $\tilde{K}=K(\sqrt[e']{\alpha})$ is of degree $e'$ and is totally ramified over $\fp_1,\ldots,\fp_r$ \cite[Proposition 3.4.2]{Weiss}. By further choosing $\alpha$ to be congruent to $1$ at primes whose Frobenius elements in $L/K$ generate $G$, we may assume that $\tilde K$ is disjoint from $L/K$. 
It now follows from Abhyankar's Lemma that $L\tilde{K}/\tilde{K}$ is  an unramified $G$-extension.

Conversely, assume $[M:K]=e$, and 
let $L/M$ be an unramified $G$-extension defined over $K$ by a tame $G$-extension $L_0/K$. The lcm of ramification indices in $L_0/K$ is at least $e'$ by definition. 
Let $\frak{p}$ be a prime of $K$ which ramifies in $L_0/K$ with ramification index $e_{\frak p}$. 
Since every prime $\frak{P}$ of $M$ lying over $\frak{p}$ is unramified in $L/M$, Abhyankar's lemma implies that $e_{\frak p}$ divides the ramification index $e(\frak{P}/\frak{p})$.
It follows that the degree $[M:K]$ is divisible by $e_{\frak p}$ for every prime $\frak p$ of $K$, and hence by $e'$, proving $e\ge e'$. 

In the case $K=\mQ$, the inertia groups of $L_0/\mQ$ generate $G$ and hence $$e(\mQ,G)=e'(\mQ,G)\geq \gexp(G). \qedhere$$
\end{proof}
\begin{remark}\label{rem:PID} Although Lemma \ref{lem:abh} is stated over number fields, it holds similarly when $K$ is replaced by a function field $k(x)$ over a field $k$ of characteristic $0$:
In this case, we define $e(k(x),G)$  similarly to be the minimal degree $[M:k(x)]$ of an overfield $M$ 
 admitting an unramified $G$-extension  defined over $k(x)$ by an extension which additionally required to be $k$-regular. 
 The invariant $e':=e'(k(x),G)$ is also defined similarly as the minimal $e'$ such that $k(x)$ admits a (tamely ramified)  $G$-extension with all inertia groups of order dividing $e'$, which is additionally required to be $k$-regular.
The proof applies similarly since Abhyankar's lemma applies to function fields  \cite[Theorem 3.9.1]{St} of characteristic $0$, 
and since  $k(x)$ has no nontrivial unramified $k$-regular extensions. 
\end{remark}

\subsection*{Basic examples} The most basic examples are those of symmetric,  abelian, dihedral, and alternating groups over $K=\qq$. It is known that there exist infinitely many degree-$n$ extensions of $\qq$ with (Galois group $S_n$ and) squarefree discriminant, i.e.\ all inertia groups are generated by transpositions (cf.\ \cite{Kedlaya}). Thus, Lemma \ref{lem:basic} implies that $e(\qq,S_n) = 2$ for all $n$. 

For finite abelian groups $G$, one has  $\gexp(G) = \exp(G)$. 
Since the ramification indices divide $\exp(G)$ in every tame $G$-extension, 
and cyclotomic fields give examples for such extensions, it follows that $e(\qq,G) = \gexp(G)$. 

Let $n\geq 3$, and let $D_n$ be the dihedral group of degree $n$ and order $2n$. It is well known that there exist infinitely many quadratic number fields with an unramified $C_n$-extension \cite{Yamamoto}, and each such extension yields a $D_n$-extension $L/\qq$ with all inertia groups of order $2$. Thus. Lemma \ref{lem:basic} gives $e(\qq,D_n)=\gexp(D_n)=2$.

Infinitely many tamely ramified extensions $L/\qq$  with alternating Galois group $A_n$ for $n\geq 3$ with all inertia groups generated by $3$-cycles are realized  in \cite{Keo} using Mestre's construction \cite{Mestre} and building on the specialization method in Section \ref{sec:general}. 
%
In particular this gives infinitely many cubic extensions $\tilde K/\qq$ such that $\tilde K$ possesses an unramified $A_n$-extension. 
Note that the best possible bound $e(\mQ, A_n) = 2$ is only known for small $n$. 
We note that previously, tamely ramified $A_n$-extensions which are unramified over given finite sets $S$ had only been given by Plans--Vila \cite{PV2}. 

\begin{remark}\label{rem:p-groups}
Let $p$ be a prime. By \cite{KKS2}, for every $p$-group there exists a field $M$ of degree at most $\exp(G)$ and an unramified $G$-extension $L/M$ which is defined by a tame $G$-extension of $\mQ$.   For regular $p$-groups $G$ \cite[Hauptsatz 10.5]{Hup}, the order of the product $g=\prod_{i=1}^rg_i$ of elements $g_i\in G$, $i=1,\ldots,r$, is at most the maximum of the orders of $g_i$, $i=1,\ldots,r$. Thus, $g^{\gexp(G)}=1$ for every $g\in G$. 
Note that every $p$-group of nilpotency class $<p$ is regular  \cite[Paragraph III.10]{Hup}. 
\end{remark}

\section{The function field case}\label{sec:ff}
\subsection{Function fields and specializations - Setup}
Let $K$ be any field of characteristic $0$ and let $E/K(t)$ be a $K$-regular $G$-extension.
The extension $E/K(t)$ has finitely many branch points $p_1,...,p_r\in \overline{K}\cup\{\infty\}$, and associated to each branch point $p_i$ is a unique conjugacy class $C_i$ of $G$, corresponding to the automorphism $(t-p_i)^{1/e_i}\mapsto \zeta (t-p_i)^{1/e_i}$ of the Laurent series field
$\overline{K}(((t-p_i)^{1/e_i}))$, where $e_i$ is minimal such that $E$ embeds into $\overline{K}(((t-p_i)^{1/e_i}))$, and $\zeta$ is a fixed primitive $e_i$-th root of unity.\footnote{If $p_i=\infty$, one should replace $t-p_i$ by $1/t$.} This $e_i$ is the {\textit{ramification index}} at $p_i$, and equals the order of elements in the class $C_i$. The class tuple $(C_1,...,C_r)$ is called the {\textit{ramification type}} of $L/K(t)$. 
Let $\mathcal{H}_r(G,C)$ denote the Hurwitz space of all $G$-covers with ramification type $C$, cf.~\cite{Voe} where it is denoted by $\mathcal H_r^{in}(C)$. 

For each $a\in K$ which is not a branch point of $E/K(t)$, there is a unique residue field extension of places of $E$ extending the place $t\mapsto a$ (independent of the choice of place, as all these residue fields are isomorphic, and Galois). We call this field the specialization of $E/K(t)$ at $t\mapsto a$, and denote it by $E_a/K$.
\subsection{Ample fields}
Additional evidence for the above heuristics comes from the function field case. 
There is no known example of a finite group $G$ where  $e(\qq,G)\neq e(k(x),G)$. Moreover, $e(k(x),G)=\gexp(G)$ for every ample field $k$, as shown in Proposition \ref{prop:ff} below. Recall that $k$ is called {\textit{ample}} if every geometrically integral variety over $k$ either has no $k$-point or a Zariski dense set of $k$-points.
\begin{prop}\label{prop:ff} Let $G$ be a nontrivial finite group and $k$ be an ample field. 
Then 
\begin{enumerate}
\item[a)] 
There exist infinitely many linearly disjoint $k$-regular $G$-extensions $L_0/k(x)$ with ramification of order dividing $\gexp(G)$. In particular $e(k(x),G)=\gexp(G)$. 
 \item[b)] there exists infinitely many  quadratic extensions $M/k(x)$ admitting an unramified $G$-extension $L/M$. 
 \end{enumerate}
\end{prop}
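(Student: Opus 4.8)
The plan is to derive both parts from a single engine: the $k$-regular realization of a finite group over $k(x)$ with prescribed cyclic inertia, which is available precisely because $k$ is ample. Concretely, given a finite group $\Gamma$, a product-one generating tuple $(c_1,\dots,c_r)$ of $\Gamma$, and any $r$ distinct points $p_1,\dots,p_r\in\mathbb P^1(k)$ (such points exist, as ample fields are infinite), I would invoke Pop's realization theorem over ample (large) fields to produce a $k$-regular $\Gamma$-extension of $k(x)$ branched exactly at the $p_i$ with inertia at $p_i$ conjugate to $\langle c_i\rangle$; see the surveys \cite{BBF}, \cite{Pop3}. Varying the branch configuration inside the infinite set $\mathbb P^1(k)$, and discarding repetitions, yields infinitely many linearly disjoint such extensions. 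The only nontrivial input is the descent of the cover to $k$, equivalently a $k$-rational point on the Hurwitz space $\mathcal H_r(\Gamma,C)$, cf.\ \cite{Voe}; this is exactly where ampleness is used, and I expect it to be the main obstacle. It is made clean by the fact that in both constructions the ramification type $C$ consists of rational conjugacy classes (the class of an involution is rational, and in (a) the tuple is inversion-closed), so the relevant space is defined over $k$.

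For part (a), fix a generating set $\{g_1,\dots,g_s\}$ of $G$ realizing the minimum defining $\gexp(G)$, so $\lcm_i\ord(g_i)=\gexp(G)$. The tuple $(g_1,g_1^{-1},\dots,g_s,g_s^{-1})$ has product $1$, generates $G$, and every entry has order dividing $\gexp(G)$. The engine with $\Gamma=G$ then gives $k$-regular $G$-extensions $L_0/k(x)$ with all inertia of order dividing $\gexp(G)$, so $e'(k(x),G)\le\gexp(G)$. For the reverse inequality I use the function-field analogue of Lemma \ref{lem:basic}: by Remark \ref{rem:PID} one has $e(k(x),G)=e'(k(x),G)$, and since the inertia groups of a $k$-regular $G$-extension generate the geometric Galois group $G$, the lcm of their orders is at least $\gexp(G)$. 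Hence $e(k(x),G)=\gexp(G)$, and varying the branch points yields the infinitely many linearly disjoint realizations claimed.

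For part (b) I cannot route through Abhyankar's lemma, since a quadratic $M$ absorbs only ramification of order dividing $2$; instead I build an unramified cover directly. The group-theoretic gadget is $\Gamma:=\langle\tau_a:a\in G\rangle\le G\wr C_2=(G\times G)\rtimes\langle\tau\rangle$, where $\tau$ swaps the factors and $\tau_a:=((a,a^{-1});\tau)$. Each $\tau_a$ is an involution outside $G\times G$, so $H:=\Gamma\cap(G\times G)$ has index $2$ in $\Gamma$; a short computation gives $H=\{(g,h):\bar g\bar h=1\text{ in }G^{\mathrm{ab}}\}$, shows $H_0:=\{1\}\times[G,G]$ is normal in $H$, and shows the first projection induces $H/H_0\cong G$. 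I realize $\Gamma$ $k$-regularly over $k(x)$ via the engine, using the all-involution product-one generating tuple $(\sigma_1,\sigma_1,\dots,\sigma_m,\sigma_m)$ obtained by doubling generating involutions $\sigma_i\in\{\tau_a\}$; call the result $\hat L/k(x)$ and set $M:=\hat L^{H}$, $L:=\hat L^{H_0}$.

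Then $M/k(x)$ is a $k$-regular quadratic extension and $\Gal(L/M)\cong H/H_0\cong G$. Unramifiedness is immediate: at every branch point $p$ the inertia of $\hat L/k(x)$ is $\langle\tau_p\rangle$ with $\tau_p$ an involution outside the normal subgroup $H$, so $\langle\tau_p\rangle^g\cap H=\{1\}$ for every conjugate; thus $\hat L/M$ is everywhere unramified, and a fortiori so is the intermediate extension $L/M$. Varying the branch points produces infinitely many such quadratic $M$, giving exactly the $k$-regular unramified $G$-extensions over quadratic $M/k(x)$ claimed. As in part (a), the single delicate point is the descent of $\hat L$ to $k$; because all inertia classes are those of involutions, hence rational, the ample hypothesis supplies the required $k$-rational point and completes the argument.
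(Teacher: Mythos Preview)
Your approach is essentially the paper's: Pop's half Riemann existence theorem for part (a), and the same wreath-product construction $\hat G=\langle (g,g^{-1})\tau:g\in G\rangle\le G\wr C_2$ for part (b). Part (b) is correct as written; your identification of $H$ and of $\ker(\pi_1|_H)=\{1\}\times[G,G]$ is a pleasant refinement of what the paper leaves implicit.

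There is, however, a genuine imprecision in how you set up part (a). Inversion-closedness of the class tuple is \emph{not} enough for the Hurwitz space $\mathcal H_{2s}(G,C)$ to be defined over $k$ and for Pop's theorem to produce a $k$-point: one needs the multiset of classes to be $k$-rational, i.e.\ stable under $C\mapsto C^m$ for every $m$ arising from $\Gal(k(\mu_{|G|})/k)$ via the cyclotomic character. Your tuple $(g_1,g_1^{-1},\dots,g_s,g_s^{-1})$ is only stable under $m=-1$. Concretely, take $G=C_5$ and an ample $k$ with $[k(\zeta_5):k]=4$ (e.g.\ $k=\mathbb Q_2$): there is no $k$-regular $C_5$-extension of $k(t)$ with exactly two branch points, so your two-point tuple $(g,g^{-1})$ cannot be realized. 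The paper avoids this by first enlarging the generating set so that whenever $x\in S$, every generator of $\langle x\rangle$ lies in $S$; this costs nothing (orders still divide $\gexp(G)$) and makes the resulting class tuple $k$-rational for every $k$.

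A second, smaller overclaim: Pop's theorem does not let you prescribe the branch locus exactly. The paper gets the infinitely many linearly disjoint realizations not by varying prescribed branch points, but by observing that a single $k$-point on the smooth Hurwitz space, together with ampleness of $k$, forces the $k$-points to be Zariski dense, hence to include covers with pairwise disjoint branch loci. With these two corrections your argument coincides with the paper's.
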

\label{lem:fctfield}
\begin{proof}
For part a) we construct extensions using the so called $(1/2)$-Riemann existence theorem:
Let $S=\{x_1,\ldots,x_r\}$ be a set of generators of $G$, with each of order dividing $\gexp(G)$, and with the following property: For every $x\in S$, all generators of $\langle x\rangle$ are contained in $S$. The tuple $T:=(x_1,x_1^{-1}, \ldots, x_r,x_r^{-1})$
then has product $1$ and generates $G$. 
Let $C=(C_1,C_1^{-1},\ldots, C_r^{-1})$ denote the corresponding ramification type, where $C_i$ is the conjugacy class of $x_i$.
The  $(1/2)$-Riemann Existence Theorem of Pop (see \cite[Theorem 4.3.3]{Har} and \cite[Proposition 1.1]{Pop96}) ensures that
the Hurwitz space $\mathcal H_{2r}(G,C)$, of all  $G$-covers of $\mathbb P^1$ with ramification type $C$, has a $k$-rational point.
Since $k$ is ample this implies that the set of $k$-rational points $\mathcal H_{2r}(G,C)(k)$ is Zariski dense. 
Every $k$-rational point of the Hurwitz space gives a $G$-extension $L_0/k(x)$ whose inertia groups are generated by elements in $C_i$,  $i\in \{1,\ldots,r\}$. 
Since the set of such points is dense, we may find infinitely many such extensions $L_0/k(x)$ with pairwise disjoint branch points, and hence linearly disjoint, proving the first assertion.
 By Remark \ref{rem:PID}, we have $e(k(x),G) = e'(k(x),G)\geq \gexp(G)$,  
and equality follows from the first assertion. 


For b), let $\Gamma := G\wr C_2 = (G\times G)\rtimes C_2$ be the standard wreath product, and let $\hat G$ be the subgroup of $\Gamma$ generated by the set $E$ of all elements of the form $(g,g^{-1})\cdot a$, with $g\in G$, and $a$ the generator of $C_2$. Since all elements of $E$ are involutions, the group $\hat G$ is by definition generated by involutions lying outside of the subgroup $G\times G\le \Gamma$. 
Applying Part a) to $\hat G$ with respect to the generating set $E$, then yields the existence of infinitely many linearly disjoint $k$-regular  $\hat G$-extensions $F/k(x)$ with inertia groups generated by involutions in $E$. Since every inertia group is of order $2$, and is mapped isomorphically under the natural projection $\hat G\ra \hat G/K\cong C_2$, the extension $F/F^K$  is unramified with Galois group $K$ over the quadratic field $M:=F^K$. Since $\hat G$ contains all elements of the form $(g,g^{-1})$, $g\in G$, the natural projection onto the first coordinate is onto, giving a subextension $L/M$ with Galois group $G$. Since the fields $F/k(x)$ are chosen linearly disjoint, so are the fields $M$. 
\end{proof}

\begin{remark}\label{rem:implication}The proof of part b) shows that an affirmative answer to Question \ref{conj:e3} for all finite groups implies that every finite group $G$ appears as a Galois group of an unramified extension over infinitely many quadratic number fields. Indeed, such a $G$-extension arises from a $\hat G$-extension with inertia groups in $E$, where  $\hat G\subset G\wr C_2$ and $E$ are as in the proof. 
\end{remark}

\begin{remark}\label{rem:Fq}
Proposition \ref{prop:ff}  is expected to hold when $k$ is replaced with a finite field $\mathbb F_q$ for a sufficiently large prime power $q$ if a similar description of connected components of Hurwitz spaces to \cite[Section 8]{EVW}  applies\footnote{Unfortunately, the corresponding result in \cite[Section 8.9]{EVW2009} remains to be proved.}.  If $q$ is sufficiently large, and the corresponding Hurwitz space has an $\mathbb F_q$-rational connected component, the Lang--Weil estimates would show that this Hurwitz space has an $\mathbb F_q$-rational point, giving rise to a $G$-Galois covering of $\mathbb P^1$ over $\mathbb{F}_q$ with the desired ramification. 
\end{remark}

\section{Specialization of regular Galois extensions}
\subsection{Ramification in specializations of regular Galois extensions}
Throughout this section let $K$ be a number field. 
The following criterion relates the ramification in regular Galois extensions to ramification in specializations.
See also Theorem I.10.10 in \cite{MM}, which is closer  to our version.

Let $\mathfrak{p}$ be a finite prime of $K$, and $a_0\in \overline{K}$ be $\mathfrak{p}$-integral with minimal polynomial $f\in K[X]$. Define $I_\mathfrak{p}(a,a_0)$ as the multiplicity of $\mathfrak{p}$ in the fractional ideal generated by $f(a)$. Obviously,  $I_\mathfrak{p}(a,a_0)\ne 0$ holds only for finitely many prime ideals $\mathfrak{p}$ of~$K$. 
%
\begin{thm}[Beckmann, \cite{Beckmann91}, Prop.\ 4.2]
\label{beckmann}
Let $N/K(t)$ be a $K$-regular $G$-extension.
Assume that all branch points of $N/K(t)$ are finite.\footnote{This is assumed without loss of generality via a linear transformation in $t$, although the theorem remains valid at infinity after a minor adaptation.}
Then with the exception of finitely many primes, depending only on $N/K(t)$, the following holds for every prime $\mathfrak{p}$ of $K$.\\
If $a\in K$ is not a branch point of $N/K(t)$ then the following condition is necessary for $\mathfrak{p}$ to be ramified in the specialization $N_a/K$:
 $$\nu_i:=I_{\mathfrak{p}}(a,a_i)>0 \text{ for some (automatically unique) branch point $a_i$.}$$
Indeed, the inertia group of a ramified prime extending $\mathfrak{p}$ in the specialization $N_a/K$ is then conjugate in $G$ to $\langle\tau^{\nu_i}\rangle$, where $\tau$ is a generator of an inertia subgroup over the branch point $t\mapsto a_i$ of $K(t)$.
\end{thm}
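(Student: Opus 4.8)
The plan is to reduce the statement about ramification in the specialization $N_a/K$ to a statement about the local behaviour of the cover $N/K(t)$ at the point $t\mapsto a$, and then to analyze that local behaviour by base-changing to the completion $K_\mathfrak{p}$ and comparing the two ``reasons'' a prime of $N_a$ can ramify over $\mathfrak{p}$: either because $\mathfrak{p}$ is a bad prime for the model of the cover (whence the finitely many exceptions), or because the specialization point $a$ collides $\mathfrak{p}$-adically with a branch point $a_i$. First I would fix a smooth projective model $\mathcal{N}\to\mathcal{C}\to \operatorname{Spec}\mathcal{O}_{K,S}$ of the cover over the ring of $S$-integers for a suitable finite set $S$ of primes (including those dividing $|G|$, the discriminants relating the $a_i$, the primes of bad reduction of the cover, and the primes where the branch divisor fails to be étale); enlarging $S$ by finitely many primes is exactly the ``exception of finitely many primes depending only on $N/K(t)$'' allowed in the statement. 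Outside $S$ the cover $\mathcal{N}\to\mathcal{C}$ is tamely ramified precisely along the horizontal divisor given by the closures of the $a_i$ and $\infty$, and the inertia at the component over $a_i$ is generated by the prescribed $\tau$ of order $e_i$.

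The key step is then local. For a prime $\mathfrak{p}\notin S$ and a non-branch value $a\in K$, consider the section $s_a\colon\operatorname{Spec}\mathcal{O}_{K_\mathfrak{p}}\to \mathcal{C}$ and pull back the cover along it; the resulting scheme is $\operatorname{Spec}$ of the ring of integers of $N_a\otimes_K K_\mathfrak{p}$, and $\mathfrak{p}$ ramifies in $N_a/K$ iff this pullback is ramified over $\mathfrak{p}$. Since $\mathcal{N}\to\mathcal{C}$ is étale away from the horizontal branch divisor $D=\sum \overline{a_i}+\overline{\infty}$, the pullback can only be ramified if the section $s_a$ meets $D$ modulo $\mathfrak{p}$, i.e. if $a\equiv a_i\pmod{\mathfrak{p}}$ (in the appropriate sense) for exactly one $i$ — here one uses that the $a_i$ are pairwise distinct modulo every prime outside $S$, which is what forces uniqueness of the offending branch point. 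The quantity $\nu_i=I_\mathfrak{p}(a,a_i)$ is precisely the intersection multiplicity of the section $s_a$ with the horizontal divisor $\overline{a_i}$ at $\mathfrak{p}$, which one reads off from the valuation of $f(a)$ where $f$ is the minimal polynomial of $a_i$; this is where the $\mathfrak{p}$-integrality of $a_i$ enters.

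The last step is to identify the inertia group. Having localized, I would pass to the strict henselization (or completion) at the intersection point and use Abhyankar's lemma / the structure of tame covers: étale-locally near the branch divisor the cover looks like $y^{e_i}=u$ for a unit $u$ times a local equation of $\overline{a_i}$, so pulling back along a section that meets $\overline{a_i}$ with multiplicity $\nu_i$ gives the extension $K_\mathfrak{p}(\sqrt[e_i]{\pi^{\nu_i}\cdot\text{unit}})$, whose ramification index over $\mathfrak{p}$ is $e_i/\gcd(e_i,\nu_i)=\operatorname{ord}(\tau^{\nu_i})$, and whose inertia group is therefore conjugate to $\langle\tau^{\nu_i}\rangle$; the conjugacy (rather than equality) reflects the choice of a place of $N_a$ above $\mathfrak{p}$ and the choice of inertia generator $\tau$ over $a_i$. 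I expect the main obstacle to be bookkeeping the finite bad set $S$ carefully enough that all the geometric inputs (good reduction of the cover, tameness, étaleness of the branch divisor, separation of branch points, invertibility of the $e_i$) simultaneously hold outside $S$, and then arguing cleanly that ``section meets branch divisor modulo $\mathfrak{p}$'' is both necessary for ramification and correctly measured by $I_\mathfrak{p}(a,a_i)$; the root-extraction computation at the end is routine once the local model is in hand. An alternative, more hands-on route avoiding schemes is to work directly with the Puiseux expansion of $N$ at $a_i$ over $\overline{K}((t-a_i))$, specialize $t\mapsto a$ $\mathfrak{p}$-adically, and track the valuation, but the geometric argument makes the uniqueness of $a_i$ and the exclusion of bad primes most transparent.
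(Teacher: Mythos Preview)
The paper does not contain a proof of Theorem~\ref{beckmann}: it is quoted as a result of Beckmann \cite{Beckmann91} (with a pointer to the closely related \cite[Theorem I.10.10]{MM}) and used as a black box throughout, so there is no ``paper's own proof'' to compare against.

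That said, your proposal is a correct outline of the standard proof of Beckmann's theorem, and is in fact close in spirit to how Beckmann herself argues. The three pillars you identify --- spreading the cover out to a smooth tame $G$-cover $\mathcal{N}\to\mathcal{C}$ over $\operatorname{Spec}\mathcal{O}_{K,S}$, interpreting $I_\mathfrak{p}(a,a_i)$ as the intersection number of the section $s_a$ with the closure of the branch point $a_i$ in the special fibre, and reading off the inertia via the local Kummer shape $y^{e_i}=(\text{unit})\cdot(t-a_i)$ of a tame cover --- are exactly the ingredients. Two small points worth tightening: first, the enlargement of $S$ must also guarantee that the branch locus is \'etale over $\operatorname{Spec}\mathcal{O}_{K,S}$ (you mention this, but it is what forces the uniqueness of the $a_i$ with $I_\mathfrak{p}(a,a_i)>0$, so it deserves emphasis); second, in the final step the identification of the inertia group as $\langle\tau^{\nu_i}\rangle$ up to conjugacy is slightly more delicate than the Kummer computation alone, since one must match the canonical generator of inertia in the arithmetic specialization with the geometric monodromy generator $\tau$ --- this uses that the residue field at $\mathfrak{p}$ is perfect and that the reduction map on tame inertia is compatible with the cyclotomic character, which is implicit in the statement that the cover has good tame reduction. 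Your alternative Puiseux-series route is also viable and is closer to the presentation in \cite{MM}.
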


We adopt the following useful definition from \cite{BSS}.
\begin{defi}[Universally ramified prime]
Let $N/K(t)$ be a $K$-regular Galois extension. A finite prime of $K$ is called universally ramified for $N/K(t)$ if it ramifies in every specialization $N_a/K$, where $a\in K$ is not a branch point of $N/K(t)$. The set of universally ramified primes is denoted by $U(N/K(t))$.
\end{defi}

\subsection{Hilbert irreducibility and weak approximation}
The following is a well-known consequence of Hilbert's irreducibility theorem in combination with Krasner's lemma (cf.\ Prop.\ 2.1 in \cite{PV}):
\begin{prop}
\label{weak_appr}
Let $K$ be a number field, $N/K(t)$ a $K$-regular $G$-extension, and $S$ a finite set of primes of $K$. Then there exist infinitely many $a\in K$ such that the specialization $N_a/K$ fulfills the following:
\begin{itemize}
\item[i)] $\Gal(N_a/K) \cong G$.
\item[ii)] $N_a/K$ is unramified at all primes in $S_0:=S\setminus U(N/K(t))$.
\end{itemize}
\end{prop}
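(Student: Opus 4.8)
The plan is to combine Hilbert's irreducibility theorem with a local approximation argument via Krasner's lemma, exactly as in \cite[Prop.\ 2.1]{PV}. First I would choose, for each prime $\mathfrak{p}\in S_0$, a local parameter-free condition that forces $N_a/K$ to be unramified at $\mathfrak{p}$. Since $\mathfrak{p}\notin U(N/K(t))$, by definition there exists at least one value $b_\mathfrak{p}\in K$, not a branch point, such that $N_{b_\mathfrak{p}}/K$ is unramified at $\mathfrak{p}$; equivalently, by Beckmann's criterion (Theorem \ref{beckmann}), $b_\mathfrak{p}$ avoids the ``congruence classes'' near the branch points that would force ramification — more precisely $I_\mathfrak{p}(b_\mathfrak{p},a_i)=0$ for every branch point $a_i$, or the prime is among the finitely many exceptional ones but still unramified in that particular specialization. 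The key point is that being unramified at $\mathfrak{p}$ is a condition that only depends on $a$ through its image in $K_\mathfrak{p}$ (the completion), and in fact on a bounded-precision truncation: by Krasner's lemma, if $a$ is $\mathfrak{p}$-adically close enough to $b_\mathfrak{p}$, then $N_a\otimes_K K_\mathfrak{p}$ and $N_{b_\mathfrak{p}}\otimes_K K_\mathfrak{p}$ have the same splitting/ramification behaviour, so $N_a/K$ is also unramified at $\mathfrak{p}$.

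Next I would package these local conditions together. Using weak approximation in $K$, the set of $a\in K$ that are simultaneously $\mathfrak{p}$-adically close to $b_\mathfrak{p}$ for every $\mathfrak{p}\in S_0$ is nonempty and in fact the intersection of $K$ with a nonempty open subset of $\prod_{\mathfrak{p}\in S_0}K_\mathfrak{p}$; this is a ``Hilbert-open'' type condition compatible with thin-set arguments. Then I would invoke Hilbert's irreducibility theorem in the strong form that allows prescribing finitely many local conditions: there exist infinitely many $a\in K$ lying in the prescribed $\mathfrak{p}$-adic neighbourhoods for all $\mathfrak{p}\in S_0$ and simultaneously avoiding the thin set of $a$ for which $\Gal(N_a/K)\not\cong G$. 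For such $a$, condition (i) holds by Hilbert irreducibility, and condition (ii) holds because the $\mathfrak{p}$-adic proximity to $b_\mathfrak{p}$ together with Krasner's lemma transports the unramifiedness of $N_{b_\mathfrak{p}}/K$ at $\mathfrak{p}$ to $N_a/K$. One should also discard the finitely many $a$ equal to a branch point, which is harmless since we are producing infinitely many $a$.

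The main obstacle, and the step deserving the most care, is making precise the statement that ``unramified at $\mathfrak{p}$'' is preserved under sufficiently close $\mathfrak{p}$-adic approximation. This requires knowing that the local extension $N_a\otimes_K K_\mathfrak{p}$ depends continuously (in a suitable sense) on $a\in K_\mathfrak{p}$ away from branch points: writing $N=K(t)[y]/(g(t,y))$ for a suitable $g$, the specialization is governed by the factorization of $g(a,y)$ over $K_\mathfrak{p}$, and Krasner's lemma guarantees that for $a$ close to $b_\mathfrak{p}$ the roots of $g(a,y)$ are close to those of $g(b_\mathfrak{p},y)$, hence generate the same extensions of $K_\mathfrak{p}$ with the same ramification. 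One subtlety is uniformity: the required precision depends on $b_\mathfrak{p}$ and on $g$, but since $S_0$ is finite this is unproblematic. Since this is a well-documented folklore result (the cited \cite[Prop.\ 2.1]{PV} and \cite{PV}), I would state it with a reference and only sketch why the local behaviour is locally constant in $a$, rather than reproving Krasner's lemma or the effective version of Hilbert irreducibility with local conditions.
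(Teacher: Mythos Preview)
Your proposal is correct and follows essentially the same approach as the paper: pick for each $\mathfrak{p}\in S_0$ a specialization $N_{b_\mathfrak{p}}/K$ unramified at $\mathfrak{p}$, use Krasner's lemma to get an $S_0$-adic open neighbourhood of good $a$'s, and then invoke that such a neighbourhood is not thin (the paper cites \cite[Proposition 3.5.3]{Ser}) to combine with Hilbert irreducibility. The only superfluous detour is your appeal to Beckmann's criterion to justify the existence of $b_\mathfrak{p}$ --- this is immediate from the definition of $U(N/K(t))$ and does not require Theorem~\ref{beckmann}.
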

\begin{proof}
Let $p\in S_0$. By definition, there is then a specialization $N_{a_p}/K$ (with $a_p\in K$) whose completion at $p$ is an unramified extension $F_p/K_p$. By Krasner's lemma, all $a\in K$ which are sufficiently close $p$-adically to $a_p$ lead to the same completion. This yields an open $S_0$-adic neighborhood of specializations $t\mapsto a$ which fulfill ii). 
Since an $S_0$-adic neighborhood is known not to be thin \cite[Proposition 3.5.3]{Ser}, it follows that there are infinitely many such specializations $t\mapsto a$ which also preserve the Galois group. 
\end{proof}

%


%
\subsection{General criteria}\label{sec:general}
The following criterion constructs, from a $K$-regular $G$-extension $N/K(t)$ with no universally ramified primes, infinitely many $G$-extensions with bounded ramification\footnote{If in addition $N/K(t)$ has a totally real specialization, the same proof shows that these $G$-extensions can also be chosen totally real.}, and hence infinitely many number fields of bounded degree over $K$ possessing unramified $G$-extensions.
%
%
\begin{thm}
\label{main_crit}
Let $N/K(t)$ be a $K$-regular Galois extension with group $G$, let $(C_1,...,C_r)$ be its ramification type, and $e_i$  the order of elements in $C_i$ ($i=1,...,r$). Assume that $U(N/K(t))=\emptyset$.
Then for every finite set $S$ of primes of $K$, there exist infinitely many $G$-extensions $F/K$  which are tamely ramified with all ramification indices dividing some $e_i$, $i=1,\ldots,r$, and unramified at $S$.
In particular, $e(K,G)\le \lcm\{e_1,...,e_r\}$.
\end{thm}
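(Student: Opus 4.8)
The plan is to combine Beckmann's theorem (Theorem \ref{beckmann}), the weak approximation consequence of Hilbert irreducibility (Proposition \ref{weak_appr}), and the hypothesis $U(N/K(t))=\emptyset$. First I would fix the finite set $S_{\mathrm{bad}}$ of exceptional primes from Theorem \ref{beckmann} (those depending only on $N/K(t)$), and enlarge the given set $S$ to $S':=S\cup S_{\mathrm{bad}}$. Since $U(N/K(t))=\emptyset$, we have $S_0':=S'\setminus U(N/K(t))=S'$, so Proposition \ref{weak_appr} produces infinitely many $a\in K$, not branch points, with $\Gal(N_a/K)\cong G$ and with $N_a/K$ unramified at every prime of $S'$; in particular unramified at all primes in the Beckmann exceptional set and at $S$.

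Next I would read off the ramification of $N_a/K$ at the remaining primes $\mathfrak p\notin S'$ using Theorem \ref{beckmann}: such a $\mathfrak p$ can ramify only if $\nu_i:=I_{\mathfrak p}(a,a_i)>0$ for a (unique) branch point $a_i$, in which case the inertia group at a prime over $\mathfrak p$ is conjugate to $\langle\tau^{\nu_i}\rangle$ where $\tau$ generates an inertia group over $t\mapsto a_i$, hence has order $e_i$. Thus every inertia group of $N_a/K$ is cyclic of order dividing some $e_i$, so $N_a/K$ is tamely ramified (the $e_i$ are bounded independently of $a$, and one may also arrange $a$ to avoid the finitely many primes below the $e_i$, or simply note tameness is automatic once the residue characteristic exceeds $e_i$ — cleanest is to further enlarge $S'$ to contain all primes dividing $\lcm\{e_1,\dots,e_r\}$ and invoke Proposition \ref{weak_appr} for that larger set). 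Setting $F:=N_a$ gives the desired $G$-extensions; taking infinitely many such $a$ lying in distinct $S'$-adic neighborhoods yields infinitely many distinct $F/K$. The bound $e(K,G)\le\lcm\{e_1,\dots,e_r\}$ then follows immediately from Lemma \ref{lem:basic}, since we have exhibited a tamely ramified $G$-extension of $K$ with all inertia orders dividing $\lcm\{e_1,\dots,e_r\}$.

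The only subtle point, and the step I would be most careful about, is the bookkeeping of exceptional primes: Beckmann's theorem only controls ramification outside a finite set depending on $N/K(t)$, and one must ensure that after forcing those primes (together with $S$ and the primes dividing the $e_i$) to be unramified via weak approximation, no ramification sneaks back in — this is exactly where the hypothesis $U(N/K(t))=\emptyset$ is essential, guaranteeing $S_0'=S'$ so that Proposition \ref{weak_appr} genuinely applies to the full enlarged set. A minor additional check is that the infinitely many specializations can be taken to yield non-isomorphic fields $F$; this follows because distinct $S'$-adic (in fact, one can use additional auxiliary primes) neighborhoods force different ramification or splitting behavior, or alternatively because only finitely many number fields of bounded degree are unramified outside any fixed finite set, so infinitude of the $a$'s forces infinitude of the $F$'s after passing to a subsequence. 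The footnote about totally real specializations is handled by the same argument with an additional archimedean condition imposed via weak approximation, which does not interact with the finite-prime analysis.
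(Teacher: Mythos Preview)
Your proposal is correct and follows essentially the same route as the paper: enlarge $S$ by the exceptional primes of Beckmann's theorem (and primes dividing $|G|$, resp.\ your $\lcm\{e_i\}$, to force tameness), apply Proposition~\ref{weak_appr} with $U(N/K(t))=\emptyset$ ensuring $S_0'=S'$, then read off inertia at the remaining primes via Theorem~\ref{beckmann}, and conclude with Lemma~\ref{lem:basic}. The paper's proof is slightly terser and does not spell out the ``infinitely many distinct $F$'' point you raise, but otherwise the arguments coincide.
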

\begin{proof}
Enlarge $S$ to contain the set of exceptional primes for $N/K(t)$ in Theorem \ref{beckmann} and the set of prime divisors of $|G|$. 
By Prop.\ \ref{weak_appr}, there are infinitely many $a\in K$ such that $N_a/K$ has group $G$ and is unramified at all primes $p\in S$. Then Theorem \ref{beckmann} yields that all ramification indices in $N_a/K$ divide some $e_i$. Tame ramification follows since $S$ contains the primes dividing $|G|$. The last assertion follows from the first by Lemma \ref{lem:basic}. 
\end{proof}
\begin{remark}
\label{rem:main_crit}
The ramification indices $e_1,\ldots,e_r$ need not all occur in the specializations $N_a/K$; determining which ones occur leads to intriguing number-theoretical problems. However, an easy but useful special case should be noted: Assume that one of the branch points of $N/K(t)$, say the one corresponding to $C_r$, is $K$-rational. Via linear transformations in $t$, we may assume that it is $t\mapsto \infty$. Now the constructed extensions $N_a/K$ only require $a$ to be $p$-adically close to certain prescribed values for all $p\in S$. One can therefore without loss choose $a$ to be $q$-integral for all primes $q\notin S$ (i.e. $I_q(a,\infty):=I_q(1/a,0)\le 0$). So by Beckmann's theorem (and the assumption that $S$ contains all the bad primes of $N/K(t)$), all primes ramified in $N_a/K$ have ramification index dividing some $e_i$ with $i\in \{1,...,r-1\}$. 
\end{remark}
In  light of  Theorem \ref{main_crit}, we search for criteria which ensure $U(N/K(t))=\emptyset$.
Most obviously, this happens if 
the Galois extension $N/K(t)$ possesses a trivial specialization, i.e.\ a non-branch point $a\in K$ such that $N_a=K$ \footnote{This has an obvious generalization to the case where $N_a/K$ is unramified. We are however mostly interested in $K=\qq$, where the trivial extension is the only unramified one.
}. 
However,  without extending the base field the condition of a trivial specialization is usually difficult to ensure. 
%

The following theorem gives another scenario in which we can eliminate universally ramified primes. 
Its assertion, together with Theorem \ref{main_crit}, yields in particular that $e(K,G) \le \gexp(G)$ for the given number field $K$.
%

For $I\leq G$,  let $C_G(I)$ and $N_G(I)$ denote the centralizer and normalizer of  $I$ in $G$, respectively. Let $\mu_e$ denote the $e$-th roots of unity.
\begin{thm}
\label{coprime_in}
Let $N/K(t)$ be a $K$-regular $G$-extension with ramification type $(C_1,\ldots,C_r)$, let $I_i$ be the cyclic group generated by an element of  $C_i$ and $e_i$ its order, for $i=1,\ldots,r$. 
Assume $e_{1}$ and $e_2$ are the only indices not dividing $\gexp(G)$, assume $\gcd(e_1,e_2)=1$, 
and assume  $C_G(I_i)=I_i$, $i=1,2$.  \\
Then there exists a $K$-regular $G$-extension $N'/K(t)$ with all inertia groups of order dividing $\gexp(G)$ and with $U(N'/K(t))=\emptyset$.
\end{thm}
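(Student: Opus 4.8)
The plan is to modify the extension $N/K(t)$ by a base change in the $t$-line that ``merges'' the two bad branch points into a single branch point whose inertia group has order dividing $\gexp(G)$. Concretely, I would look for a rational map $\varphi\colon \mathbb P^1_K \to \mathbb P^1_K$, i.e.\ a nonconstant $\psi(s)\in K(s)$ with $t=\psi(s)$, so that the pullback $N' := N\otimes_{K(t)} K(s)$ is again $K$-regular with group $G$, and so that the only branch points of $N'/K(s)$ that map onto the two bad branch points $t\mapsto a_1,a_2$ of $N/K(t)$ are points whose ramification index in $N'/K(s)$ divides $\gexp(G)$. The key observation is Abhyankar/Riemann--Hurwitz behaviour of inertia under pullback: if $s_0$ maps to $a_i$ with local ramification index $m$ in the map $\psi$, then the inertia group over $s_0$ in $N'$ is generated by $\tau_i^{m}$, where $\tau_i$ generates $I_i$. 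So choosing $m=e_i$ would kill that inertia entirely (giving an unramified point, which we then want to avoid creating new issues), while choosing $m$ a suitable divisor of $e_i$ makes the new inertia group $\langle \tau_i^{m}\rangle$ of order $e_i/\gcd(e_i,m)$, which we want to divide $\gexp(G)$.

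The crucial arithmetic input is the hypothesis $\gcd(e_1,e_2)=1$. I would use it to construct $\psi$ via a ``dessins''-type / explicit rational function argument: since $\gcd(e_1,e_2)=1$, one can find a cover of $\mathbb P^1$ branched over $\{a_1,a_2,\infty\}$ (after moving one unramified-enough point to $\infty$) in which the fiber over $a_1$ consists of one point with ramification $e_1$ (absorbing the whole $I_1$) together with points of ramification $1$, and the fiber over $a_2$ consists of one point with ramification $e_2$, with the fiber over $\infty$ tame of order dividing $\gexp(G)$ — or, better, choose $\psi$ so that over $a_1$ there is a unique ramified point of index exactly $e_1$ and over $a_2$ a unique ramified point of index exactly $e_2$, and these two points are glued to become a single point of $\mathbb P^1_s$; this is where coprimality lets the orders ``cancel'' so the merged inertia has order $1$ or divides $\gexp(G)$. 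The role of $C_G(I_i)=I_i$ is then to control universal ramification of $N'$: a prime $\mathfrak p$ is universally ramified only if it divides the relevant resultants/discriminants forcing ramification in every specialization, and the argument (following the Beckmann analysis in Theorem~\ref{beckmann} together with a counting argument for how many specializations can avoid $\mathfrak p$) shows $U(N'/K(s))=\emptyset$ precisely because the ``bad'' branch point of $N'$ now has trivial (or $\gexp$-bounded) inertia and the self-centralizing condition removes the obstruction coming from the remaining branch points — essentially there is then a non-branch $s\in K$ with $N'_s/K$ unramified, exploiting that specialization values near the merged point give unramified completions.

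So the steps, in order: (1) by a linear change of variable in $t$, normalize so that a $K$-rational non-branch value (or one of the non-bad branch points, if $K$-rational) is at a convenient place, keeping the bad branch points $a_1,a_2$ finite; (2) using $\gcd(e_1,e_2)=1$, write down explicitly a rational function $\psi(s)\in K(s)$ realizing the desired local behaviour over $a_1,a_2$ (small degree, e.g.\ degree $e_1e_2$ or $e_1+e_2$, constructed from the two cyclic covers $s\mapsto s^{e_i}$ patched together) and verify that $\psi$ is defined over $K$ and that the pullback stays $K$-regular with group $G$ (linear disjointness: the pullback $N\cdot K(s)/K(s)$ has group $G$ because $K(s)/K(t)$ is a regular extension and $N/K(t)$ is regular, so no constants are added); (3) compute, via the Abhyankar pullback rule for inertia, the ramification type of $N'/K(s)$: the branch points lying over $a_3,\dots,a_r$ and over $\infty$ keep inertia dividing $\gexp(G)$ (their $e_i$ already divided $\gexp(G)$, and ramification indices in $\psi$ only add divisors), while the branch points over $a_1,a_2$ have their inertia orders cut down by the coprimality-engineered $\gcd$ to divide $\gexp(G)$; (4) invoke the $C_G(I_i)=I_i$ hypothesis to conclude $U(N'/K(s))=\emptyset$, either by exhibiting an unramified specialization directly or by the resultant/counting argument showing no prime can ramify in all specializations. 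I expect step (2)–(3), namely engineering a single rational function over $K$ that simultaneously produces the right local monodromy at $a_1$ and $a_2$ while not accidentally creating a new branch point with inertia of order not dividing $\gexp(G)$ and while preserving regularity, to be the main obstacle; the coprimality $\gcd(e_1,e_2)=1$ is exactly the hypothesis that makes such a $\psi$ exist (a lcm/merging construction), and the self-centralizing condition is the technical hypothesis that then upgrades ``inertia bounded'' to ``no universally ramified primes''.
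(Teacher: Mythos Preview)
Your pullback idea is the right skeleton, and it matches the paper: one does replace $t$ by a rational function to kill the inertia at the two bad branch points (the paper does this in two cyclic steps, first $u^{e_1}=\tilde\alpha t$ to kill ramification at $t_1=0$, then $v^{e_2}=\tilde\beta/u$ to kill it at $t_2=\infty$). But your talk of ``merging'' or ``gluing'' the two branch points is not what happens and cannot happen under a base change; rather, each bad branch point becomes an \emph{unramified} point of $N'/K(s)$, and then the specialization of $N'$ at that point is the residue extension of $N$ at the original branch point.

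The real gap is your account of how $C_G(I_i)=I_i$ enters and how $U(N'/K(s))=\emptyset$ is proved. There is no resultant/counting argument. The self-centralizing hypothesis is used \emph{before} the pullback, at the branch points themselves: since the decomposition group $D_i$ acts on $I_i$ through $\Gal(K(\mu_{e_i})/K)$ with kernel $C_{D_i}(I_i)=I_i$, one gets $D_i/I_i\cong \Gal(K(\mu_{e_i})/K)$, forcing the residue field at $t_i$ to be exactly $K(\mu_{e_i})$. (The hypotheses also force $t_1,t_2$ to be $K$-rational via the branch cycle argument, since each $C_i$ is the unique class of its order among the $C_j$'s; you need this for the explicit pullback to be defined over $K$.) After the two cyclic pullbacks, the points $0$ and $\infty$ of $K(v)$ are non-branch points of $N'$, and the specializations there are Kummer twists $K(\mu_{e_1},\sqrt[e_1]{\alpha/\tilde\alpha})$ and $K(\mu_{e_2},\sqrt[e_2]{\beta/\tilde\beta})$. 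The coprimality $\gcd(e_1,e_2)=1$ is used here, not in the geometry of $\psi$: by choosing $\tilde\alpha,\tilde\beta$ with appropriate valuations one arranges that the first field is unramified at all primes dividing $e_2$ (and more), and the second at all remaining primes, so no prime of $K$ ramifies in both specializations. That exhibits two specializations with disjoint ramified sets and hence $U(N'/K(s))=\emptyset$. Without identifying the residue fields as cyclotomic (which is exactly what $C_G(I_i)=I_i$ buys), you have no control over which primes ramify in those specializations, and your step~(4) has no content.
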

\begin{proof}
Let $t_1,t_2$ be the branch points corresponding to $C_1$ and $C_2$, respectively. 
Since by 
assumption, the classes $C_{1}$ and $C_2$ are both the only ones of their respective element order, the branch cycle argument (e.g. \cite{Voe}, 2.8) implies that $t_1,t_2$ are $K$-rational. 
After applying a fractional linear transformation in $t$, we can assume without loss of generality that $t_1,t_2$ are $0$ and $\infty$, respectively.

It is well known that the conjugation action of the decomposition group $D_i$ at $t_i$ on $I_i$ factors through the action of $\Gal(K(\mu_{e_i})/K)$ on $I_i\cong \mu_{e_i}$, $i=1,2$. Since the latter action is faithful and the kernel of the action of $D_i$ is $C_{D_i}(I_i) = I_{i}$, we get that the Galois group $D_i/I_i$ of the residue extension at $p_i$ is isomorphic to $\Gal(K(\mu_{e_i})/K)$. Since the residue extension $N_{t_i}$ at $t_i$ contains $\mu_{e_i}$, we deduce that $N_{t_i}=K(\mu_{e_i})$ for $i=1,2$. 

\vspace{2mm}

Since $e_1$ and $e_2$ are coprime by assumption, it follows that no finite prime of $K$ ramifies in both residue extensions $N_{t_i}/K$, $i=1,2$. We would ideally like to conclude from this the non-existence of universally ramified primes for $N/K(t)$. Note however that $t_1$ and $t_2$ are branch points. The further task is therefore to create a different regular $G$-extension, unramified at $0$ and $\infty$, whilst keeping control over the residue extensions at these points. This will be achieved by a suitable pullback, i.e., compositum of $N$ with a rational function field $K(v)\supset K(t)$. For the sake of transparency, we do this pullback in two separate steps, killing ramification at $0$ and at $\infty$ respectively.

\vspace{2mm}

{\textit{First step}}:
It follows from the previous that the completion $\hat N_{0}$  of $N/K(t)$ at $0$ is $K(\mu_{e_1})((\sqrt[e_1]{\alpha t}))$ for $\alpha\in K(\mu_{e_1})$.
Note that 
as  $\alpha$ is of integral $p$-adic valuation in $K(\mu_{e_1})$ for every $p$ dividing $e_2$
(since $p$ does not ramify in the cyclotomic extension $K(\mu_{e_1})/K$), there exists $\tilde \alpha\in K$ such that $\alpha/\tilde\alpha$ is of $p$-adic valuation $0$ for all $p$ dividing $e_2$. 
Then the extension $K(\mu_{e_1},\sqrt[e_1]{\alpha/\tilde\alpha})/K$ is unramified at all such $p$.
Take  a root $u$  of $X^{e_1}-\tilde\alpha t$ over $K(t)$, and consider the extension $N(u)/K(u)$. 
We claim that $N(u)/K(u)$ is a $K$-regular $G$-extension with residue extensions $K(\mu_{e_1},\sqrt[e_1]{\alpha/\tilde \alpha})/K$ and $K(\mu_{e_2})/K$ over $0$ and $\infty$ respectively.  To prove the claim, note that since $\infty$ has ramification index $e_2$ in $N$  and $e_1$ in $K(u)$, the extensions $N/K(t)$ and $K(u)/K(t)$ are linearly disjoint over $\overline K$, and hence $N(u)/K(u)$ is a $K$-regular $G$-extension. 
Moreover, since $e_1$ and $e_2$ are coprime, every place of $N$ over $\infty$ is totally ramified in $N(u)/N$, hence the residue field of $N(u)/K(u)$ at $\infty$ is $N_\infty=K(\mu_{e_2})$. 
By Abhyankar's lemma $N(u)/K(u)$ is unramified over $0$. Moreover, since $\hat N_0\cong K(\mu_{e_1})((\sqrt[e_1]{\alpha t}))$, the compositum  $\hat N_0\cdot K((u))$ is isomorphic to the field $$ K((\sqrt[e_1]{\tilde \alpha t}))(\mu_{e_1},\sqrt[e_1]{\alpha t}) = K(\mu_{e_1},\sqrt[e_1]{\alpha/\tilde\alpha})((u)).$$ Hence the residue field of $N(u)$ at $0$ is 
$K(\mu_{e_1},\sqrt[e_1]{\alpha/\tilde\alpha}),$ 
proving the claim. 

\vspace{2mm}

{\textit{Second step}}:
Let $\frak q$ be the product of the ramified primes in $K(\mu_{e_1},\sqrt[e_1]{\alpha/\tilde\alpha})/K$. As in the first step, the completion of $N(u)/K(u)$ at $\infty$ is $K(\mu_{e_2})((\sqrt[e_2]{\beta/u}))$ for some $\beta\in K(\mu_{e_2})$. Since $\frak q$ is coprime to $e_2$, there exists $\tilde\beta\in K$ with the same $p$-adic valuation as $\beta$ for all $p$ dividing $\frak q$. Let $v$ be a root of $X^{e_2}-\tilde\beta/u$ and consider the extension $N(v)/K(v)$. As in the above argument, the residue extensions of $N(v)/K(v)$ at $0$ and $\infty$ are $K(\mu_{e_1},\sqrt[e_1]{\alpha/\tilde\alpha})/K$ and $K(\mu_{e_2},\sqrt[e_2]{\beta/\tilde\beta})/K$, respectively. Since $0$ and $\infty$ are unramified in $N(v)$, since the primes $p$ not dividing $\frak q$ are unramified in $K(\mu_{e_1},\sqrt[e_1]{\alpha/\tilde\alpha})$, and since the primes dividing $\frak q$ are unramified in $K(\mu_{e_2},\sqrt[e_2]{\beta/\tilde\beta})$, the set $U(N(v)/K(v))$ is empty. 

Since $N(v)/K(v)$ is a $K$-regular $G$-extension with all inertia groups of order dividing $\gexp(G)$, 
the assertion follows.
\end{proof}

%
The advantage of the above theorem is that it does not require a concrete polynomial to determine the set of universally ramified primes, but rather depends only on group-theoretical data (the Galois group and ramification type of the extension). 

\subsection{Examples}

We exhibit several examples of almost simple groups $G$ which are realizable infinitely often over $\qq$ with all ramification indices dividing $\gexp(G)$. Note that the set of examples could easily be enlarged. Our selection intends to demonstrate the criteria exhibited in Theorem \ref{main_crit}, Remark \ref{rem:main_crit}, Theorem \ref{coprime_in}, and also includes the smallest sporadic simple group, the Mathieu group $M_{11}$. 
\begin{prop}
\label{ge2}
Let $G$ be $M_{11}$  or one of the groups $A_5,PSL_2(7), PSL_2(11)$, $PGL_2(7), PSL_3(3),  PSp_4(3).2, PSp_6(2)$. For every finite set $S\subset\mathbb{P}$ of prime numbers, there are infinitely many tamely ramified $G$-extensions of $\qq$ unramified at $S$ and with all inertia groups of order $2$. In particular, there exist infinitely many quadratic extensions $K/\qq$, unramified at $S$, such that $K$ possesses an everywhere unramified $G$-extension.
\end{prop}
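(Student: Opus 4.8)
The plan is to prove Proposition \ref{ge2} by exhibiting, for each group $G$ in the list, a $\qq$-regular $G$-extension $N/\qq(t)$ to which one of our criteria applies, and then invoking Theorem \ref{main_crit} (or its refinement via Remark \ref{rem:main_crit}) to produce the desired infinitely many tamely ramified $G$-extensions of $\qq$ with inertia of order $2$ and unramified at the prescribed set $S$; the final clause about quadratic fields is then immediate from Lemma \ref{lem:basic}. The groups naturally split into three families according to which tool is used, and the bulk of the work is locating, for each $G$, a regular realization with an admissible ramification type.

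For the three groups $PSp_4(3).2$, $PSp_6(2)$, and (most likely) one more, I would apply Theorem \ref{coprime_in}: it suffices to produce a $\qq$-regular $G$-extension whose ramification type $(C_1,\dots,C_r)$ has exactly two classes $C_1,C_2$ of element order not dividing $\gexp(G)=2$, with those two orders coprime, and with $C_G(I_i)=I_i$ (i.e.\ the corresponding cyclic inertia groups self-centralizing) for $i=1,2$. Concretely one wants, say, a rational class of involutions together with one class of order $e_1$ and one of order $e_2$ with $\gcd(e_1,e_2)=1$, both self-centralizing. Such rigid or almost-rigid triples are classical for these groups, e.g.\ via genus-zero systems; one checks the centralizer condition in the character table / ATLAS and verifies rationality of the two relevant classes via the branch cycle argument, so that Theorem \ref{coprime_in} yields $N'/\qq(t)$ with all inertia of order dividing $\gexp(G)=2$ and $U(N'/\qq(t))=\emptyset$, whence Theorem \ref{main_crit} applies. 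For $M_{11}$, $PSL_2(11)$, and $PSL_3(3)$ I would instead start from a known explicit regular realization (there are standard polynomials in the literature) and apply the variant described in Remark \ref{rem:main_crit}: if the realization has a $\qq$-rational branch point carrying a class of order $2$ (put it at $\infty$), and all other ramification has order dividing $\gexp(G)=2$ — or, more flexibly, if the set of universally ramified primes can be shown empty by an argument parallel to the proof of Theorem \ref{coprime_in} (the conditions labelled ``Remark \ref{rem:main_crit}'' in the introduction) — then Theorem \ref{main_crit} again gives the conclusion. Finally, for $A_5$ and $PSL_2(7)$ I would simply take explicit known polynomials realizing these groups regularly over $\qq(t)$ with ramification type consisting of involutions (and a rational branch point), verify directly that they have no universally ramified primes (e.g.\ by exhibiting a specialization unramified at the finitely many bad primes, or a trivial specialization), and again conclude via Theorem \ref{main_crit}.

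In every case the endgame is the same: enlarge $S$ to contain Beckmann's finitely many exceptional primes for the chosen $N/\qq(t)$ together with the primes dividing $|G|$; apply Proposition \ref{weak_appr} to get infinitely many $a\in\qq$ with $\Gal(N_a/\qq)\cong G$ and $N_a/\qq$ unramified at $S$; apply Theorem \ref{beckmann} to see every ramification index in $N_a/\qq$ divides some $e_i$, hence divides $\gexp(G)=2$; tameness is automatic since $S$ contains all primes dividing $|G|$; and Lemma \ref{lem:basic} (with Abhyankar's lemma) upgrades each such $N_a/\qq$ to an everywhere unramified $G$-extension of a quadratic field $K$ which, by the construction in the proof of that lemma, can be taken unramified at $S$ as well.

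The main obstacle is not the deduction machinery, which is already packaged in Theorems \ref{main_crit} and \ref{coprime_in}, but the input data: for each of the eight groups one must actually name a $\qq$-regular $G$-extension whose ramification type is of the required shape. For $PSp_4(3).2$ and $PSp_6(2)$ the delicate points are checking self-centralization $C_G(I_i)=I_i$ of the two non-involution inertia groups and rationality of all relevant conjugacy classes — both are finite ATLAS computations but must be done carefully, since a failure of either hypothesis breaks Theorem \ref{coprime_in}. For $M_{11}$, $PSL_2(11)$, $PSL_3(3)$ the subtlety is verifying that the particular published polynomial really does satisfy the emptiness-of-$U$ conditions (rational branch point of order $2$ at $\infty$, control of residue extensions at the remaining branch points). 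I expect these verifications, rather than any new idea, to constitute essentially all of the proof.
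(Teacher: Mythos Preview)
Your high-level plan matches the paper's: split the eight groups into three batches and handle each with the appropriate criterion, then run the common endgame via Proposition \ref{weak_appr}, Theorem \ref{beckmann}, and Lemma \ref{lem:basic}. The unidentified third group for Theorem \ref{coprime_in} is $PGL_2(7)$, via its rationally rigid $(2,6,7)$-triple; the centralizer checks are straightforward.

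However, you have the mechanism of Remark \ref{rem:main_crit} backwards. The point is not to place an order-$2$ class at $\infty$ with the remaining classes also of order $2$ (that would make the remark vacuous). Rather, one has a ramification type with \emph{exactly one} class of order $>2$, and \emph{that} class sits at a $\qq$-rational branch point, which is moved to $\infty$. Choosing $a$ integral at all $q\notin S$ forces $I_q(a,\infty)\le 0$, so the $\infty$-branch point never contributes ramification, and all inertia in $N_a/\qq$ has order $2$. This is what the paper does for $M_{11}$, $PSL_2(11)$, and $A_5$; for $PSL_2(7)$ and $PSL_3(3)$ the extensions taken from \cite{KRS} already have \emph{all} inertia of order $2$, and one only needs to verify $U=\emptyset$, which is done by exhibiting two specializations with coprime discriminants.

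There is also a genuine missing step in your $M_{11}$ sketch. The known regular realization from \cite{K_m11} has ramification type $(2A,2A,3A,5A)$, i.e.\ \emph{two} non-involution classes, and Remark \ref{rem:main_crit} suppresses only one. The paper first pulls back along $t=2u^3$: since $\qq(u)/\qq(t)$ is totally ramified of degree $3$ at $0$ and $\infty$, Abhyankar's lemma kills the $3A$ and leaves the $5A$, while each $2A$ branch point splits into three, yielding a $(2A^6,5A)$-extension of $\qq(u)$. Only then does Remark \ref{rem:main_crit} apply. Emptiness of $U$ is then checked concretely by computing discriminants of two explicit specializations. You should anticipate this intermediate pullback rather than assume a suitable realization already exists.
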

\begin{proof}
We prove the first assertion, and the second follows from the first by applying Lemma \ref{lem:basic}. 
The  cases $PGL_2(7),   PSp_4(3).2, PSp_6(2)$ are consequences of the criterion in Theorem \ref{coprime_in}. We verify that the theorem applies  purely group-theoretically using Magma:
The group  $PGL_2(7)$ has a rationally rigid triple $C$ of conjugacy classes of element orders $2$, $6$, and $7$ respectively.\footnote{Recall that a conjugacy class is called rational, if its elements $x$ are conjugate to all powers coprime to the element order, or equivalently, if $N_G(\langle x\rangle)$ projects onto $Aut(\langle x\rangle)$ via its conjugation action on $\langle x\rangle$. A class tuple is then called rational if all occurring classes are rational. }  Thus by the rigidity criterion, cf.\ e.g.\ \cite[Theorem I.4.8]{MM}, there exists a $G$-extension $N/\mQ(t)$ with ramification type $C$. 
The normalizer of an element of order $6$ in $PGL_2(7)$ is the dihedral group $D_6$ of order $12$, and for order $7$, it is $\zz/7\zz\rtimes \zz/6\zz$.  
 Therefore the assumptions of Theorem \ref{coprime_in} are fulfilled. 
Similarly, the group $PSp_4(3).2$ has a rationally rigid triple of classes of element orders $(2,8,9)$, and the classes of elements of order $8$ and $9$ respectively fulfill the required assumptions.
Finally, the simple group $PSp_6(2)$ has a rationally rigid triple of classes of element orders $(2,7,9)$,
and the classes of elements of order $7$ and $9$ respectively fulfill the required assumptions. 

In the cases $G=M_{11}$, $PSL_2(11)$ or $A_5$, we claim that there exists a regular $G$-extension with all but one ramification index equal to $2$, and  no universally ramified primes. Applying Theorem \ref{main_crit} together with Remark \ref{rem:main_crit} then yields  $G$-extensions of $\qq$ with all inertia groups of order $2$.

The polynomial $f(t,X)$ given in \cite[Thm.\ 1]{K_m11} has regular Galois group $M_{11}$ over $\qq(t)$ and ramification type $(2A,2A,3A,5A)$ (with the last two branch points equal to $\infty$ and $0$). 
Take $N/\qq(u)$ to be the splitting field of $f(2u^3,X)$. As in the proof of Theorem \ref{coprime_in}, this is a translation of a $G$-extension of $\mQ(t)$ where $t=2u^3$, by the extension $\mQ(u)/\mQ(t)$ which is totally ramified, and hence $N/\qq(u)$ is a $\qq$-regular $M_{11}$-extension. By Abhyankar's lemma its  ramification type is therefore $(2A,2A,2A,2A,2A,2A,5A)$.
To verify that there are no universally ramified primes, note that the root field of $f(2\cdot (9/5)^3,X)$ has prime power discriminant $45513961^4$, and $45513961$ is not a universally ramified prime either, as it is unramified in the splitting field of $f(2,X)$.
For $G=PSL_2(11)$, the appendix of \cite{MM} gives a $G$-extension, given by the polynomial 
$$\begin{array}{rl} f(t,X) = & X^{11} - 3X^{10} + 7X^9 - 25X^8 + 46X^7 - 36X^6 + 60X^4 \\ & -121X^3+140X^2-95X+27+tX^2(X-1)^3.\end{array}$$ 
Hence the corresponding cover $\mathbb P^1\ra\mathbb P^1$ is given by the rational function 
$$x\mapsto \frac{x^{11} - 3x^{10} + 7x^9 - 25x^8 + 46x^7 - 36x^6 + 60x^4 -121x^3+140x^2-95x+27}{x^2(x-1)^3}.$$ Its ramification is easily computed by finding the order of the roots of the derivatives, and is given by the tuple $(2A,2A,2A,6A)$. 
Specializing $t\mapsto 1$ and $t\mapsto 2$, gives residue extensions with coprime discriminant. 
For $G=A_5$, a regular $G$-extension with ramification type $(2A,2A,2A,3A)$ and without universally ramified primes is given (via a polynomial $f(0,v,x)$) in \cite[Theorem 3.1]{KRS}.

 In the case $G=PSL_2(7)$,  an extension of $\qq(t)$ with all inertia groups of order $2$ and without universally ramified primes is deduced from \cite[Proof of Theorem 3.2]{KRS} by specializing some of the parameters. 
Finally, a $PSL_3(3)$-extension of $\qq(t)$ with all inertia groups of order $2$ is given in \cite[Lemma 3.4]{KRS}. Specializing that polynomial at $t\mapsto 1$ and $t\mapsto 3$ gives extensions with coprime discriminant. 

\end{proof}

\begin{remark}
\label{rem:an}
As can be seen in the proof of Proposition \ref{ge2}, successful application of Theorem \ref{coprime_in} for a group $G$ requires two ingredients: first, the existence of two rational conjugacy classes $C_1$ and $C_2$ with trivial centralizers and coprime element orders; and secondly, a regular Galois realization with group $G$ (e.g., using the rigidity criterion or a braid genus criterion as in \cite[Theorem III.7.8]{MM}) whose ramification type contains only classes of involutions apart from $C_1$ and $C_2$.
It is worth pointing out that the first condition is fulfilled for all alternating groups $A_n$, with a few small exceptions. In particular, for $n\ge 7$ odd, the classes $C_1$ of cycle structure $(n-2.1.1)$ and $C_2$ of cycle structure $(n-3.2.1)$ are easily verified to fulfill the assumptions. To apply Theorem \ref{coprime_in}, it would then suffice to show the existence of a rational point on any one of infinitely many Hurwitz spaces of $A_n$-covers with ramification type $(C_1,...,C_r)$, where $C_3$,..., $C_r$ are classes of involutions in $A_n$ and $r\ge 3$ is arbitrary. Such existence results for all $n$ have at least been obtained in certain comparable situations, cf.\ e.g.\ \cite{Hallouin}. This gives hope that $e(\qq,A_n) = 2$ can be shown in general with our methods.
\end{remark}

\subsection{Basic operations}\label{sec:basic}
We generate further classes of examples via suitable group extensions.
We first remark that the above constructions are closed under finite direct products. 
\begin{remark}\label{rem:direct}Let $G$, $H$ be finite groups, and $K$ be a number field. 
Assume  there exists a tame $H$-extension $L_1/K$ with ramification indices dividing $\gexp(H)$ and infinitely many pairwise linearly disjoint tame $G$-extensions with ramificiation dividing $\gexp(G)$. Then by picking one of the above $G$-extensions $L_2/K$ which is linearly disjoint from $L_1$, the extension $L_1L_2/K$ has Galois group $G\times H$ and its inertia groups are of order dividing $\lcm(\gexp(G),\gexp(H))$. 
Since $\gexp(G\times H) = \lcm(\gexp(G),\gexp(H))$ this gives $e(\qq,G\times H)= \gexp(G\times H)$.
\end{remark}

Next we show that for groups $G$ with $\gexp(G)=2$ as in Proposition \ref{ge2}, 
one can realize any iterated wreath product $((G \wr S_{n_1})\wr \cdots \wr S_{n_{r-1}}) \wr S_{n_r}$ 
with ramification of order $2$. Here $H\wr S_n:=H^{n}\rtimes S_n$ is the wreath product with respect to the natural action of $S_n$ on $\{1,\ldots,n\}$. 
\begin{prop}\label{prop:wreath}
Let $K$ be a number field, $G$ be a finite group and $n\in \mathbb{N}$. Suppose $G$ is realizable over $K(t)$ with at most one branch point of ramification index $>2$, and with no universally ramified primes. Then the same is true for $G \wr S_n$. In particular, $e(\qq, G\wr S_n)=2$.
\end{prop}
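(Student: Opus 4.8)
The plan is to realize $G \wr S_n$ as a Galois group over $K(t)$ with bounded ramification by combining a realization of $G$ with a realization of $S_n$, using the standard embedding problem / fiber product trick for wreath products. First I would take the $K$-regular $G$-extension $N_G/K(t)$ guaranteed by hypothesis, having at most one branch point (say $t\mapsto\infty$) of ramification index $>2$ and with $U(N_G/K(t))=\emptyset$. Independently, by Hilbert irreducibility together with the fact that $S_n$ is realizable regularly over $K(t)$ with all inertia generated by transpositions (e.g. via the generic polynomial $X^n + t X + t$, or more concretely a Morse-type polynomial whose branch points all have ramification index $2$, and one $K$-rational branch point), one obtains a $K$-regular $S_n$-extension $N_S/K(t)$ with ramification indices dividing $2$ and $U(N_S/K(t))=\emptyset$. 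One then wants to glue these to get a $G\wr S_n$-cover: pull the $G$-cover back along the degree-$n$ cover of lines associated to the $S_n$-cover (i.e. take the Galois closure of the appropriate fiber product), and twist by $N_S$.

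The key steps, in order, are as follows. (1) Set up the wreath product carefully: if $f\colon \tilde X\to \mathbb P^1_t$ is the degree-$n$ cover with Galois closure group $S_n$, and $g\colon Y\to\mathbb P^1_t$ is the $G$-cover, form $Y\times_{\mathbb P^1_t}\tilde X \to \tilde X$; its Galois closure over $K(t)$ has group a subgroup of $G\wr S_n$, and one arranges (by choosing $N_G$ and $N_S$ linearly disjoint, which is automatic after a generic translation since they have disjoint branch loci) that this group is all of $G\wr S_n$. (2) Compute the ramification type of the resulting cover: over a branch point of $N_S$ (ramification index $2$, a transposition), the fiber product picks up ramification that corresponds to an element of $G\wr S_n$ which is a product of a transposition-type element with a diagonal-looking $G$-part; over a branch point of $N_G$ where $\tilde X\to\mathbb P^1$ is unramified, one gets $n$ copies of the $G$-inertia, i.e. an element whose order equals the order of the $G$-inertia. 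Using Abhyankar's lemma where the branch loci collide, and translating so that the unique $>2$ branch point of $N_G$ lands on a $K$-rational non-branch or $K$-rational branch point of $N_S$, one checks the ramification indices of the wreath product cover all divide $2$ \emph{except} at the single distinguished point, which has index equal to $\lcm(e,2)$ where $e$ is the exceptional index of $N_G$ — and crucially this distinguished branch point is $K$-rational and can be moved to $\infty$. (3) Show $U=\emptyset$ for the wreath product extension: a universally ramified prime would have to be universally ramified in a component built from $N_G$ or $N_S$; since neither of those has universally ramified primes, and since the bad primes coming from the fiber-product construction are controlled, one invokes (a mild extension of) the argument in the proof of Theorem~\ref{coprime_in} or simply exhibits explicit good specializations. (4) Apply Theorem~\ref{main_crit} with Remark~\ref{rem:main_crit} to the distinguished rational branch point at $\infty$: one obtains infinitely many tame $G\wr S_n$-extensions of $K$ with all inertia of order dividing $2$, unramified at any prescribed finite set $S$; then Lemma~\ref{lem:basic} gives $e(\qq,G\wr S_n)=2$ (the lower bound being $\gexp(G\wr S_n)\ge 2$ since $G\wr S_n$ is nonabelian for $n\ge 2$, and $=2$ since it is generated by transpositions lifted appropriately together with the $G$-generators realized inside one coordinate — in fact $\gexp(G\wr S_n)=\gexp$ of a generating set using the involutions, which one checks is $2$).

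The main obstacle I expect is step (2), the bookkeeping of the ramification type of the wreath-product cover and verifying that only \emph{one} branch point ends up with ramification index exceeding $2$. The delicate point is the interaction at branch points where the $S_n$-locus and the $G$-locus meet, or where the ramification of the degree-$n$ map $\tilde X\to\mathbb P^1$ (which is unramified away from the $S_n$-branch locus) forces the fiber-product cover to have inertia that is a product of several $G$-inertia generators rather than a single one — one must ensure these products still have order dividing $2$, which works precisely because at the $G$-branch points the map $\tilde X\to\mathbb P^1$ is unramified, so the $n$ points of the fiber each contribute independently and the inertia element is a single $G$-generator in one coordinate (hence order dividing $2$ unless it is the exceptional class), while at the $S_n$-branch points $N_G$ is unramified so only the transposition contributes. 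Arranging the one exceptional $G$-branch point to be $K$-rational and disjoint from the $S_n$-branch locus is what makes Remark~\ref{rem:main_crit} applicable and is the crux of the degree bound; the linear-disjointness and $U=\emptyset$ parts are then comparatively routine consequences of the corresponding properties for $N_G$ and $N_S$.
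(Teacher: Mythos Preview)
Your fiber-product setup in step~(1) does not produce $G\wr S_n$. With $Y\to\mathbb P^1_t$ already $G$-Galois and $\tilde X\to\mathbb P^1_t$ of degree~$n$, the pullback $Y\times_{\mathbb P^1_t}\tilde X$ is still $G$-Galois over $\tilde X$, so the Galois closure of $Y\times_{\mathbb P^1_t}\tilde X$ over $K(t)$ is just the compositum of $K(Y)$ with the $S_n$-closure of $K(\tilde X)$, with group (a subgroup of) $G\times S_n$, not $G\wr S_n$. To get the wreath product one must \emph{compose} rather than fiber: place the $G$-cover above the intermediate curve of the $S_n$-cover, i.e.\ take a degree-$n$ rational map $\pi\colon \mathbb P^1_t\to\mathbb P^1_u$ with monodromy $S_n$ and then form the Galois closure of $N_G\to\mathbb P^1_t\to\mathbb P^1_u$ over $K(u)$. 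This is what the paper does, with the explicit choice $u=u_\beta=\prod(t-\alpha_i)/\prod(t-\beta_i)$, and the ramification bookkeeping in your step~(2) only becomes tractable once the construction is set up this way (the branch points of $\pi$ and of $N_G$ are then disjoint by design, so Abhyankar applies cleanly).

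Even after fixing the construction, your step~(3) has a genuine gap. The claim that a universally ramified prime for the wreath cover must be universally ramified for $N_G$ or $N_S$ is not justified: a prime could in principle ramify in every specialization of the composite without doing so in either factor. The paper does not argue this way at all. Instead it chooses the degree-$n$ map $\pi$ so that the fibers over $u=0$ and $u=\infty$ consist of prescribed $K$-rational non-branch points $\alpha_1,\dots,\alpha_n$ and $\beta_1,\dots,\beta_n$ of $N_G$; the residue extensions of the wreath cover at $u=0$ and $u=\infty$ are then the composita $\prod N_{G,\alpha_i}$ and $\prod N_{G,\beta_j}$, and one uses $U(N_G/K(t))=\emptyset$ (via Proposition~\ref{weak_appr}) to choose the $\beta_j$ so that these two composita have disjoint ramified primes. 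This explicit control over two specializations is the mechanism that forces $U=\emptyset$, and it is absent from your sketch.
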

\begin{proof} 
Let $N/K(t)$  be such a $G$-extension and denote by $B$ its set of branch points. 
By Hilbert's irreducibility theorem there exists a tuple $ \alpha = (\alpha_1,\ldots,\alpha_n)\in \mathbb A_K^n$ of points outside  $B$, such that the residue extensions $N_{\alpha_i}/K$, $i=1,...,n$, are linearly disjoint $G$-extensions. 
For a tuple $ b=(b_1,\ldots,b_n)$ of variables, consider the polynomial
$$ f_{ \alpha, b}(u,T) = (T-\alpha_1)\cdots (T-\alpha_n) - u(T-b_1)\cdots(T-b_n) \in K[b_1,\ldots,b_n,u,T].$$
For each value $\beta=(\beta_1,\ldots,\beta_n)\in \mathbb A_K^n$ of $ b$, let $u_\beta=\frac{\prod_{i=1}^n(t-\alpha_i)}{\prod_{i=1}^n(t-\beta_i)}\in K(t)$. By Gauss's lemma, $f_{\alpha,\beta}(u_\beta,T)$ is the minimal polynomial for $t$ over $K(u_\beta)$.

We claim that there are infinitely many values $\beta=(\beta_1,\ldots,\beta_n)\in (\mathbb A_K\setminus B)^n$ for $ b$ such that:
\begin{enumerate}
\item all inertia groups of the splitting field $\Omega_1$ of $f_{ \alpha, \beta}(u_\beta,T)\in K(u_\beta)[T]$ are generated by transpositions;
\item no branch point of $N/K(t)$ lies over a branch point of $K(t)/K(u_\beta)$;
\item the set of ramified primes in $N_{\beta_j}/K$ is disjoint from that of $N_{\alpha_i}/K$ for all $i,j\in\{1,\ldots,n\}$. 
\end{enumerate}

The remaining proof is then divided into two parts.

\vspace{2mm}

{\textit{First part: Assuming the claim, we prove the theorem}}.   Fix $\beta$ fulfilling (1), (2) and (3), and let $\Omega$ be the Galois closure of $N/K(u_\beta)$. 
We show  that $\Omega/K(u_\beta)$ is a Galois extension with all except at most one ramification index of order $2$, with no universally ramified primes, and such that $\Gal(\Omega/K(u_\beta))\cong G \wr S_n$. The theorem then follows from Theorem \ref{main_crit} and Remark \ref{rem:main_crit}. 
 
To show that all except at most one of the ramification indices are $2$, note that the branch points of $\Omega/K(u_\beta)$ are either branch points of $K(t)/K(u_\beta)$ or the restrictions of branch points of $N/K(t)$ to $K(u_\beta)$. By conditions (1) and (2), every branch point of $K(t)/K(u_\beta)$ is of ramification index $2$ and is unramified in $N/K(t)$. Thus, as $\Omega$ is the compositum of conjugates of $N/K(u_\beta)$, Abhyankar's lemma implies that each branch point of $K(t)/K(u_\beta)$ has ramification index $2$ in $\Omega$. 
By assumption all except at most one branch point of $N/K(t)$ have inertia group of order $2$, and by condition (2) they are unramified in  $K(t)/K(u_\beta)$. Thus the restriction of any such place to $K(u_\beta)$ has ramification index $2$ in $N/K(u_\beta)$, and in $\Omega/K(u_\beta)$ as well, by Abhyankar's lemma.

To show that there are no universally ramified primes, note that since the places  $t\mapsto \alpha_i$, $i=1,\ldots,n$ are unramified in $N/K(t)$ and their restriction to $K(u_\beta)$ is $u_\beta\mapsto 0$, the residue extension of $\Omega/K(u_\beta)$ over $u_\beta\mapsto 0$ is the compositum of $N_{\alpha_i}$, $i=1,\ldots,n$. Similarly, its residue extension over $\infty$ is the compositum of $N_{\beta_j}$, $j=1,\ldots,n$. By condition (3), the sets of primes ramifying in the two composita are disjoint, showing $U(\Omega/K(u_\beta))=\emptyset$. 

Finally, since the group $\Gal(\Omega_1/K(u_\beta))$ is a transitive subgroup of $S_n$ which is generated by transpositions by condition (1), it is isomorphic to $S_n$. 
We deduce a natural embedding of the Galois group $\Gal(\Omega/K(u_\beta))$ into $G \wr S_n$, projecting onto $S_n$. By the choice of $\alpha_1,...,\alpha_n$, the group $\Gal(\Omega/K(u_\beta))$ has a subgroup isomorphic to the kernel $G^n$ of that projection, namely the decomposition group at $u_\beta\mapsto 0$. This shows $\Gal(\Omega/K(u_\beta)) = G\wr S_n$.

\vspace{2mm}

{\textit{Second part: proving the claim}}.  We show that the set of $\beta\in(\mathbb A_K\setminus B)^n$ satisfying  the first and second condition is Zariski open, and those satisfying the third are $R$-adically open, where $R$ is the set of all primes ramified in at least one of the residue extensions $N_{\alpha_i}/K$, $i=1,\ldots,n$. Since an $R$-adically open subset is Zariski dense, its intersection with the two non-empty Zariski open subsets is Zariski dense and hence infinite, which would prove the claim.

For the first condition, note that the discriminant $\Delta_{ \alpha, b}(u)$ of $f_{\alpha,b}(u,T)$ with respect to $T$ is a polynomial in $u$ whose coefficients are rational functions in the $b_i$'s. It is even square free \footnote{Indeed, for all $u\mapsto u_0\in K\setminus\{0\}$, the polynomial $f_{\alpha,b}(u_0,T)$, viewed over the ring of symmetric functions of the $b_i$, is a generic degree-$n$ polynomial, and therefore well-known to have square free discriminant.}, whence the discriminant $\Delta$  of the polynomial $\Delta_{ \alpha, b}(u)$ with respect to $u$ is a nonzero rational function in $b$. Thus the condition $\Delta\neq 0$ is a Zariski open condition on the set of $\beta\in (\mathbb A_K\setminus B)^n$.
In other words, $f_{\alpha,\beta}(u,T)$ has square free discriminant for a Zariski open set of values $\beta$. But a square free polynomial discriminant implies a square free relative discriminant of the extension $K(t)/K(u_\beta)$, meaning that all inertia groups are generated by transpositions, as desired. 

For the second condition, note that the place $t \mapsto t_i$ of $K(t)$ extends the place $u_\beta\mapsto u_\beta(t_i)$ in $K(u_\beta)$. Therefore, to ensure that $t_i\in B$ does not lie over a branch point of $K(t)/K(u_\beta)$, it is sufficient to have $\Delta_{ \alpha, \beta}(u_{\beta}(t_i))\neq 0$. 
This condition is Zariski open, as desired. 

For the third condition, 
note that as $U(N/K(t))=\emptyset$,   Proposition \ref{weak_appr} (and its proof) with $S$ taken to be $R$ gives an $R$-adically open set of values $\beta$   such that $N_{\beta_i}$ is unramified in $R$ for all $i=1,\dots,n$, proving the claim. 
\end{proof}

\end{document}